\theoremstyle{plain}
\newtheorem{theorem}{Theorem}[section]
\newtheorem{lemma}[theorem]{Lemma}
\newtheorem{cor}[theorem]{Corollary}
\newtheorem{prop}[theorem]{Proposition}
\theoremstyle{definition}
\newtheorem{definition}[theorem]{Definition}
\theoremstyle{remark}
\newcommand{\eps}{\varepsilon}
\newcommand{\N}{{\mathbb N}}
\newcommand{\R}{{\mathbb R}}
\newcommand{\ifff}{if and only if }
\newcommand{\Id}{\mathrm{Id}}
\newcommand{\C}{\mathbb{C}}
\newcommand{\K}{\mathbb{K}}
\newcommand{\dopu}{{:}\allowbreak\ }
\newcommand{\loglike}[1]{\mathop{\rm #1}\nolimits}
\newcommand{\DPr}{\loglike{DPr}}
\newcommand{\sign}{\loglike{sign}}
\newcommand{\lin}{\loglike{lin}}
\newcommand{\re}{\loglike{Re}}
\newcommand{\Norm}{|\mkern-2mu|\mkern-2mu|}
\newcounter{abc}   % Counter f\"{u}r statements-environment wird deklariert
\newcounter{iiiii} % Counter f\"{u}r aequivalenz-environment wird deklariert
\newenvironment{aequivalenz}
{\setcounter{iiiii}{0}
\begin{list}%
{{\rm (\roman{iiiii})}}%  Falls die items nicht angegeben sind: i)u.s.w.
{\usecounter{iiiii}
%\topsep=.1\baselineskip plus .2\baselineskip minus .1\baselineskip
%\itemsep=.1\baselineskip plus .2\baselineskip minus .1\baselineskip
\parsep=0pt plus 1pt
\topsep=1pt plus 2pt minus 1pt
\itemsep=1pt plus 2pt minus 1pt
\leftmargin=3\baselineskip \labelsep=.6\baselineskip
\labelwidth=2.4\baselineskip
\rightmargin 0pt}%
}%               Das war das zweite Argument von "newenvironment"
{\end{list}}
\newenvironment{statements}%
{\setcounter{abc}{0}
\begin{list}%
{{\rm (\alph{abc})}}%  Falls die items nicht angegeben sind: (a) u.s.w.
{\usecounter{abc}
%\topsep=.1\baselineskip plus .2\baselineskip minus .1\baselineskip
%\itemsep=.1\baselineskip plus .2\baselineskip minus .1\baselineskip
\parsep=0pt plus 1pt
\topsep=1pt plus 2pt minus 1pt
\itemsep=1pt plus 2pt minus 1pt
\leftmargin=3\baselineskip \labelsep=.6\baselineskip
\labelwidth=2.4\baselineskip
\rightmargin 0pt}%
}%               Das war das zweite Argument von "newenvironment"
{\end{list}}
\newcommand{\bea}{\begin{eqnarray*}}
\newcommand{\eea}{\end{eqnarray*}}
\newcommand{\beq}{\begin{equation}}
\newcommand{\eeq}{\end{equation}}
\newcommand{\begsta}{\begin{statements}}
\def\endsta{\end{statements}}
\newcommand{\begaeq}{\begin{aequivalenz}}
\def\endaeq{\end{aequivalenz}}
\newcommand{\iy}{\infty}
\def\DP{Daugavet property}
\newcommand{\BS}{Banach space}
\newcommand{\TFAE}{the following conditions are equivalent: }
\numberwithin{equation}{section}
\begin{document}

\title%[Inner measure of non-compactness of the sphere]
[Thickness, $\ell_1$-types, and the almost Daugavet property]
{Thickness of the unit sphere,
$\ell_1$-types, and the almost Daugavet property}

\author{Vladimir Kadets, Varvara Shepelska and Dirk Werner}

\date{February 5, 2009}

\subjclass[2000]{Primary 46B04; secondary 46B03, 46B25}

\keywords{Daugavet property, $\ell_1$-subspace, types, thickness}

 \thanks{The first author was partially
 supported by a grant from the {\it Alexander-von-Humboldt
Foundation} and by Junta de Andaluc\'{\i}a grant P06-FQM-01438;
the second author was partially supported by the N.~I.~Akhiezer
foundation.}

\address{Department of Mechanics and Mathematics,
Kharkov National University,
 pl.~Svobody~4,  61077~Kharkov, Ukraine}
\email{vova1kadets@yahoo.com; shepelskaya@yahoo.com}

\address{Department of Mathematics, Freie Universit\"at Berlin,
Arnimallee~6, \qquad {}\linebreak D-14\,195~Berlin, Germany}
\email{werner@math.fu-berlin.de}

\begin{abstract}
We study those Banach spaces $X$ for which  $S_X$ does not admit a finite
$\eps$-net consisting of elements of $S_X$ for any $\eps < 2$. We give
characterisations of this class of spaces in terms of
$\ell_1$-type sequences and in terms of  the almost Daugavet
property. The main result of the paper is: a separable Banach space $X$
is isomorphic to a space from this class if and only if $X$ contains an
isomorphic copy of $\ell_1$.
\end{abstract}

\maketitle

\thispagestyle{empty}

\section{Introduction}
For a Banach space $X$, R.~Whitley \cite{Whitley} introduced the
following parameter, called \textit{thickness},
which is essentially the inner measure of
non-compactness of the unit sphere $S_X$:
$$
T(X) = \inf \{\eps > 0\dopu  \text{there exists a finite }
\eps\text{-net for }  S_X \text{ in } S_X \},
$$
or equivalently, $T(X)$ is the infimum of those $\varepsilon$ such that the unit
sphere of $X$ can be covered by a finite number of balls with
radius $\varepsilon$ and centres in $S_X$.
He  showed  in the infinite dimensional case that $1 \le T(X) \le 2$,
and in particular that  $T(C(K))=1$ if $K$ has isolated points and
$T(C(K))=2$ if not.

In this paper we concentrate on the spaces with
$T(X)=2$. Our main results are the following; $B_X$ denotes the closed
unit ball of~$X$.

\begin{theorem} \label{th0.1}
For a separable \BS \ $X$ \TFAE
\begsta
\item $T(X)=2$;
 \item there is a sequence $(e_n) \subset B_X$ such
that for every $x \in X$
$$
\lim_{n\to\infty} \|x+e_n\| =\|x\|+1;
$$
 \item there is a norming subspace $Y \subset X^*$ such that the equation
 \beq \label{0-eq1}
  \|\Id+T\|=1+\|T\|
 \eeq
holds true for every rank-one operator $T\dopu X \to X$ of the
form $T = y^*\otimes x$, where $x \in X$ and $y^* \in Y$.
\endsta
\end{theorem}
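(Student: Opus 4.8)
The plan is to prove the cycle (a)$\Rightarrow$(b)$\Rightarrow$(c)$\Rightarrow$(a). The implication (b)$\Rightarrow$(a) is in any case immediate: if $(e_n)$ is as in~(b), then $\|e_n\|=\|0+e_n\|\to1$, and for any finite set $\{x_1,\dots,x_k\}\subset S_X$ we have $\|e_n-x_i\|=\|(-x_i)+e_n\|\to\|x_i\|+1=2$ for each $i$; hence for large $n$ the point $e_n/\|e_n\|\in S_X$ lies at distance $>\eps$ from every $x_i$ whenever $\eps<2$, so $S_X$ has no finite $\eps$-net and $T(X)=2$ (recall $T(X)\le2$ always).

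For (a)$\Rightarrow$(b) I would use separability through a diagonal argument. Fix a countable dense set $\{u_k\}\subset S_X$. Since $T(X)=2$, the finite set $\{-u_1,\dots,-u_n\}$ is not a $(2-\tfrac1n)$-net of $S_X$, so there is $e_n\in S_X$ with $\|e_n+u_k\|>2-\tfrac1n$ for all $k\le n$. Then $\|e_n+u_k\|\to2=\|u_k\|+1$ for each fixed $k$. To pass from the dense set to all of $X$, note that $x\mapsto\|x+e_n\|$ and $x\mapsto\|x\|+1$ are $1$-Lipschitz, which handles density on $S_X$, and a short homogeneity estimate (testing against a functional norming $u+e_n$, where $u=x/\|x\|$) upgrades $\lim_n\|u+e_n\|=2$ for $u\in S_X$ to $\lim_n\|x+e_n\|=\|x\|+1$ for all $x\neq0$.

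For (c)$\Rightarrow$(a) I would first reformulate~\eqref{0-eq1}. By homogeneity it suffices to treat $y^*\in S_Y$, $x\in S_X$, and $\|\Id+y^*\otimes x\|=\sup_{z\in B_X}\|z+y^*(z)x\|=2$ is easily seen to be equivalent to the Daugavet-type slice condition: for all $y^*\in S_Y$, $x\in S_X$, $\alpha>0$, $\eps>0$ there is $z\in B_X$ with $y^*(z)>1-\alpha$ and $\|z+x\|>2-\eps$ (a vector $z$ almost realising the supremum has $|y^*(z)|$ close to $1$; replace $z$ by $-z$ if needed). Then I would run the standard iteration for Daugavet-type properties, but with every auxiliary functional chosen inside $Y$: given $f_1,\dots,f_m\in S_X$ and $\delta>0$, work inside successive finite intersections of $Y$-slices, using at each step that ``$\|f+z\|$ large'' together with ``$Y$ norming'' produces an $h\in S_Y$ with $h(f),h(z)$ both close to $1$; after $m$ steps this yields $z\in B_X$ with $\|z+f_i\|>2-\delta$ for all $i$. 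Taking $f_i=-x_i$ for a hypothetical finite $\eps$-net $\{x_1,\dots,x_k\}\subset S_X$ with $\eps<2$ and normalising $z$ gives a point of $S_X$ at distance $>\eps$ from every $x_i$, a contradiction; hence $T(X)=2$.

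I expect the main obstacle to be (b)$\Rightarrow$(c): manufacturing the norming subspace $Y$. The naive candidates fail — the set $\{y^*\colon \limsup_n|y^*(e_n)|=\|y^*\|\}$ (whose members do satisfy~\eqref{0-eq1}, since one may take the almost-antipodal vectors among the $e_n$) is not closed under addition, and for $X=\ell_1$ with $(e_n)$ the unit vectors the obvious norming subspace $c_0$ fails~\eqref{0-eq1} outright, because $\Id+e_1^*\otimes(-e_1)$ is a norm-one coordinate projection. The plan is therefore: first pass to a subsequence of $(e_n)$ that is ``$\ell_1$-like'' relative to a countable dense subset $D\subset X$, i.e.\ $\|x+\sum_i a_ie_{n_i}\|$ differs from $\|x\|+\sum_i|a_i|$ by a controlled error uniformly for $x\in D$ (this is where the $\ell_1$-type language of the paper enters); then, using the weak${}^*$ metrisability of $B_{X^*}$, extract from functionals norming the increments $x+e_{n_i}$ a compatible family of limit functionals that norm the elements of $D$, and take $Y$ to be their closed linear span. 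The delicate point is verifying~\eqref{0-eq1} for \emph{every} element of $Y$, not merely the generators, since~\eqref{0-eq1} is not stable under sums in general; this should be forced by arranging the generating functionals to be asymptotically ``disjointly supported'' along the subsequence, so that norms of their combinations add up and the required almost-antipodal vectors can be built blockwise. This construction is where essentially all the work lies.
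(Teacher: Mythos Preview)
Your arguments for (a)$\Rightarrow$(b) and (c)$\Rightarrow$(a) are essentially those of Propositions~\ref{a-b} and~\ref{2-T3}; the slice reformulation you describe is already packaged in the paper as Lemma~\ref{1-th2}, and the iteration through nested $Y$-slices is the same.

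For (b)$\Rightarrow$(c) you have correctly located the difficulty and the general shape of the solution: pass to a subsequence of $(e_n)$ that is $\ell_1$-like against an increasing exhaustion of $X$, and then produce a norming family of functionals whose linear combinations have additive norms, so that the Daugavet equation propagates from generators to all of~$Y$. But your proposed mechanism --- weak$^*$ limits of functionals norming $x+e_{n_i}$ --- gives no control over the values of the limit functionals on the tail $(e_k)$, which is precisely what one needs in order to force the ``asymptotic disjoint support'' you invoke; as written this is a wish, not a construction. The paper's route (Proposition~\ref{theo08-1}) is more explicit and passes through Theorem~\ref{2-T1}: it suffices to exhibit a \emph{double-norming asymptotic $\ell_1$-sequence} in $X^*$. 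Given the thinned $(e_n)$ and a dense $(x_n)\subset S_X$ with each value attained infinitely often, one \emph{prescribes} $\tilde f_n^*$ on $\lin\{x_n,e_{n+1},e_{n+2},\dots\}$ by $\tilde f_n^*(x_n)=1-\eps_n$ and $\tilde f_n^*(e_k)=(1-\eps_{k-1})g_{n,k}$ for $k>n$, where $(g_n)\subset\ell_\infty$ is a fixed ``independent'' $\pm1$ sequence (e.g.\ Rademacher values at a dense set of irrationals). The $\ell_1$-estimate on the subsequence forces $\|\tilde f_n^*\|\le1$, and Hahn-Banach extends it to $f_n^*\in B_{X^*}$. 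The first prescription makes $(f_n^*)$ double-norming; the second makes it an isometric $\ell_1$-basis, since for any $a_1,\dots,a_n$ one may pick $j>n$ with $g_{s,j}=\sign a_s$ for all $s\le n$ and evaluate at $e_j$. This Rademacher-type device --- replacing your ``arrange to be disjointly supported'' by an explicit assignment of tail values before invoking Hahn-Banach --- is the missing ingredient.
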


\begin{theorem} \label{th0.2}
A separable \BS \ $X$ can be equivalently renormed to have thickness
$T(X)=2$ if and only if $X$ contains an isomorphic copy of $\ell_1$.
\end{theorem}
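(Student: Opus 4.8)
The plan is to prove the two implications separately; the forward one is routine, and the renorming is the heart of the matter.

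\emph{$T(X)=2$ for some equivalent norm $\Rightarrow$ $\ell_1\hookrightarrow X$.} Since ``contains $\ell_1$'' is an isomorphic invariant I may assume $T(X)=2$ for the given norm. By Theorem~\ref{th0.1} there is $(e_n)\subseteq B_X$ with $\lim_n\|x+e_n\|=\|x\|+1$ for all $x\in X$; in particular $\|e_n\|\to1$, so after passing to a tail $\|e_n\|\ge\frac12$. Now I would extract an $\ell_1$-subsequence by the standard recursion: having found $e_{n_1},\dots,e_{n_{k-1}}$ with $\|\sum_{i<k}a_ie_{n_i}\|\ge c_{k-1}\sum_{i<k}|a_i|$ for all scalars, I use that on any bounded --- hence, in their finite-dimensional linear span, relatively compact --- set of vectors $v$ and on the unit circle the functions $\|v+\sigma e_m\|$ converge uniformly to $\|v\|+1$; choosing $n_k$ large and splitting according to whether $|a_k|$ is small, one obtains $\|\sum_{i\le k}a_ie_{n_i}\|\ge c_k\sum_{i\le k}|a_i|$ with $c_k\downarrow c_\infty>0$. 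Together with the trivial $\|\sum_i a_ie_{n_i}\|\le\sum_i|a_i|$ this shows $(e_{n_k})$ is equivalent to the $\ell_1$-basis.

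\emph{$\ell_1\hookrightarrow X$ $\Rightarrow$ some equivalent norm has $T(X)=2$.} By Theorem~\ref{th0.1} it is enough to build an equivalent norm $\Norm\,\cdot\,\Norm$ and a sequence $(e_n)$ in its unit ball with $\lim_n\Norm x+e_n\Norm=\Norm x\Norm+c$ for some fixed $c>0$ (replacing $e_n$ by $e_n/c$ then produces constant $1$). Fix $Z\subseteq X$ with $Z\cong\ell_1$. A harmless preliminary renorming --- e.g.\ $\Norm x\Norm_0=\inf\{\|x-z\|+\rho\|z\|_{\ell_1}\dopu z\in Z\}$ with $\rho>0$ small --- leaves the topology of $X$ untouched and makes $Z$ isometrically $\ell_1$, with unit vector basis $(z_n)$; alternatively James' distortion theorem already provides $\ell_1$-copies with distortion arbitrarily close to $1$. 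One would then look for a norm that ``amplifies'' the $\ell_1$-geometry of $Z$ inside $X$, the first guess being the unit ball $\overline{\mathrm{conv}}(B_X\cup\mu B_Z)$ with $\mu>1$ (so that the $z_n$ protrude), with $e_n$ built from $(z_n)$. The upper estimate $\Norm x+e_n\Norm\le\Norm x\Norm+\Norm e_n\Norm$ and the value of $\Norm e_n\Norm$ fall out at once from the triangle inequality and the polar description of the ball.

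The crux --- and the step I expect to be the real obstacle --- is the matching lower estimate $\liminf_n\Norm x+e_n\Norm\ge\Norm x\Norm+c$ for \emph{every} $x\in X$, not just for $x\in Z$. For $x\in Z$ this holds, with equality in the limit, because in $\ell_1$ the functional $\mathrm{sign}(x)+e_n^{*}$ norms $x$ and $e_n$ simultaneously once $n$ passes the support of $x$. For general $x$ the naive renorming breaks down: the vector realizing the gauge of $x+e_n$ can absorb the $z_n$-component, bringing the (possibly rigid, e.g.\ $C(K)$-type) original norm of $X$ back into play and destroying additivity. Defeating this requires a more resourceful construction --- for instance, using $\ell_1\cong\ell_1(\N\times\N)$ to split $Z$ into a doubly-indexed family $(z_{k,n})$ and letting $e_n$ carry in its $\ell_1$-tail good approximants of $x_1,\dots,x_n$, where $(x_k)$ is dense in $X$, so that the norming functional of the $x_k$ closest to a given $x$ still nearly norms $e_n$; equivalently, realizing a nontrivial $\ell_1$-type over $X$ in $X^{**}$ and, by separability, pulling it back to a sequence in $B_X$ from which the required norm is read off. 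In every variant the type relation is first secured on a $\Norm\,\cdot\,\Norm$-dense set of $x$ and then extended to all of $X$ by a $3\eps$-argument, legitimate because equivalent norms share the topology and $(e_n)$ is bounded. (If one prefers condition~(c) of Theorem~\ref{th0.1}, the norming subspace $Y\subseteq X^{*}$ would be assembled from a copy of $\ell_1$ in $X^{*}$ --- which exists since $\ell_1\hookrightarrow X$ forces $\ell_1\hookrightarrow X^{*}$ --- enlarged to keep $Y$ norming, and one checks the rank-one Daugavet equation against the slices the construction supplies.)
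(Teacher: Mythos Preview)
Your forward implication is fine and matches the paper's: a canonical $\ell_1$-type sequence, produced by Theorem~\ref{th0.1}, has a subsequence equivalent to the $\ell_1$-basis by exactly the finite-dimensional compactness recursion you sketch.

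The reverse implication, however, is not a proof but a description of the difficulty. You correctly identify that the naive renorming $\overline{\mathrm{conv}}(B_X\cup\mu B_Z)$ fails the lower estimate for general $x\notin Z$, and you then list three possible rescue strategies (double-indexing $\ell_1$, realizing a type in $X^{**}$, assembling a norming $\ell_1$-copy in $X^{*}$) without carrying any of them out. None of these is obviously workable as stated; for instance, the fact that $\ell_1\hookrightarrow X$ forces $\ell_1\hookrightarrow X^{*}$ gives no control over whether such a copy can be made \emph{norming} in any equivalent dual norm, which is exactly what condition~(c) demands.

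The paper's construction is quite different from anything you suggest. After renorming so that $Y\cong\ell_1$ isometrically, one considers the family $\mathcal{P}$ of all seminorms $\tilde p\le\|\cdot\|$ that agree with $\|\cdot\|$ on $Y$, and takes (by Zorn) a \emph{minimal} element $p\in\mathcal{P}$. The key step is to show that minimality forces $\lim_n p(x+e_n)=p(x)+1$ for every $x$: if this failed along some ultrafilter $\mathcal{U}$, then the functionals $p_r(x)=\mathcal{U}\text{-}\lim_n p(x+re_n)-r$ would lie strictly below $p$ somewhere, yet an averaging of $\bar p=\lim_{r\to\infty}p_r$ over rotations produces a seminorm in $\mathcal{P}$ dominated by $p$, contradicting minimality. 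The final norm is $\Norm x\Norm=p(x)+\|[x]\|_{X/Y}$. An alternative, more concrete, route (due to Johnson) replaces $p$ by $\|Tx\|_{L_\infty}$, where $T$ is a norm-one extension to $X$, via $1$-injectivity of $L_\infty[0,1]$, of the map $e_n\mapsto r_n$ (Rademacher functions). Either way, the missing idea is a mechanism that globally enforces the type equation, and your proposal does not supply one.
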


We mention that it has been proved in \cite{BCP}
that a space with thickness
$T(X)=2$ contains a copy of~$\ell_1$. 

Recall that a subspace $Y \subset X^*$ is said  to be norming (or
1-norming) if for every $x \in X$
$$
\sup_{y^* \in S_Y} |y^*(x)| = \|x\|.
$$
$Y$ is norming \ifff $S_Y$ is weak$^*$ dense in $B_{X^*}$.

Condition~(b) of Theorem~\ref{th0.1} links our investigations to
the theory of types \cite{KriMau}. Recall that a type on a
separable Banach space $X$ is a function of the form
$$
\tau(x)= \lim_{n\to \infty} \|x+e_n\|
$$
for some bounded sequence $(e_n)$. In \cite{KriMau} the notion of an
$\ell_1$-type is defined by means of convolution of types; a special
instance of this is a type generated by a sequence $(e_n)$ satisfying
\begin{equation} \label{eq-type}
\tau(x)= \lim_{n\to \infty} \|x+e_n\| = \|x\| + 1.
\end{equation}
To simplify notation let us call
a type like this a \textit{canonical $\ell_1$-type} and
a sequence $(e_n)\subset B_X$ satisfying (\ref{eq-type}) a
\textit{canonical $\ell_1$-type sequence}.

Condition~(c) links our investigations to the theory of
\BS s with the \textit{\DP} introduced in \cite{KadSSW} and developed further
for instance in the papers
\cite{BKSW} \cite{IvKadWer}, \cite{KadKalWer}, \cite{KW}; see also the
survey \cite{Wer}.
We will say that a Banach space
$X$  has the \textit{\DP\ with respect to $Y$ $(X\in \DPr(Y))$} if
the \textit{Daugavet equation} (\ref{0-eq1}) holds true for every
rank-one operator $T\dopu X \to X$ of the form $T = y^*\otimes x$,
where $x \in X$ and $y^* \in Y$, and it has the \textit{almost \DP} or
is an \textit{almost Daugavet space} if it has $\DPr(Y)$ for some
norming subspace $Y\subset X^*$.
This definition is a generalization (introduced in
\cite{KadShepW})  of the by now well-known \DP\ of \cite{KadSSW},
which is $\DPr(Y)$ with $Y=X^*$.

In this language Theorem~\ref{th0.2} says, by Theorem~\ref{th0.1},
that a separable Banach space can be renormed to have the almost \DP\
if and only if it contains a copy of $\ell_1$.

In Section~2 we present a characterisation of almost Daugavet spaces
in terms of $\ell_1$-sequences of the dual. The proofs of
Theorems~\ref{th0.1} and~\ref{th0.2} will be given in Sections~3
and~4.

The following lemma
is the main technical prerequisite that we use; it
is the analogue of \cite[Lemma~2.2]{KadSSW}.
Up to part~(v) it was proved in \cite{KadShepW}; however, (v)
follows along the same lines.
By a slice of $B_X$ we mean a set of the form
$$
S(y^*,\eps) = \{x\in B_X\dopu \re y^*(x) \ge 1-\eps\}
$$
for some $y^*\in S_{X^*}$ and some $\eps>0$, and a weak$^*$ slice
$S(y,\eps)$ of the dual ball $B_{X^{*}}$ is a particular case of
slice, generated by element $y \in S_X \subset X^{**}$.

\begin{lemma}\label{1-th2}
If $Y$ is a norming subspace of $X^*$, then the following
assertions are equivalent.
 \begaeq
 \item
$X$  has the \DP\ with respect to $Y$.
 \item
For every $x \in S_{X}$, for every $\eps > 0$, and for every
$y^*\in S_{Y}$ there is some $y \in S(y^*,\eps)$ such that
\begin{equation} \label{1-eq2}
\|x+ y\|\ge 2-\eps.
\end{equation}
 \item
For every $x \in S_{X}$, for every $\eps > 0$, and for every
$y^*\in S_{Y}$ there is a slice $S(y_1^*,\eps_1) \subset
S(y^*,\eps)$ with $y_1^*\in S_{Y}$ such that {\rm(\ref{1-eq2})}
holds for every $y \in S(y^*,\eps_1)$.
 \item
For every $x^*\in S_{Y}$, for every $\eps > 0$, and for every
weak$^*$
slice $S(x,\eps)$ of the dual ball $B_{X^{*}}$ there is some
$y^*\in S(x,\eps)$ such that $\|x^{*}+ y^*\|\ge 2-\eps$.
 \item
For every $x^*\in S_{Y}$, for every $\eps > 0$, and for every
weak$^*$ slice $S(x,\eps)$ of the dual ball $B_{X^{*}}$ there is
another weak$^*$ slice $S(x_1,\eps_1) \subset S(x,\eps)$ such that
$\|x^{*}+ y^*\|\ge 2-\eps$ for every $y^*\in S(x_1,\eps_1)$.
 \endaeq
\end{lemma}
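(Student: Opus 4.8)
The plan is to run the standard ``slice'' reformulation of the Daugavet equation and then upgrade it to the \emph{sub}slice statements (iii), (v) by an averaging trick. First I would put (i) into sup-form: by homogeneity, $X\in\DPr(Y)$ iff $\|\Id+y^*\otimes x\|=2$ for all $x\in S_X$, $y^*\in S_Y$, and since $\|\Id+y^*\otimes x\|=\sup_{z\in B_X}\|z+y^*(z)x\|\le\|z\|+|y^*(z)|\|x\|\le 2$ automatically, (i) says precisely that this supremum equals $2$ for every normalized pair. Passing to adjoints, $\|\Id_X+y^*\otimes x\|=\|\Id_{X^*}+x\otimes y^*\|=\sup_{z^*\in B_{X^*}}\|z^*+z^*(x)y^*\|$, so (i) equivalently asserts that this \emph{second} supremum equals $2$ for all normalized $x,y^*$; here one uses that the slices of $B_{X^*}$ cut out by elements of $X$ are exactly the weak$^*$ slices $S(x,\eps)$.

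The passage from these sup-conditions to (ii) and (iv) is an elementary unimodular-scalar computation. For ``(i)$\Rightarrow$(ii)'': if $\sup_z\|z+y^*(z)x\|=2$, pick $z\in B_X$ with $\|z+y^*(z)x\|>2-\eps$; then $|y^*(z)|>1-\eps$, and after rotating $z$ by a unimodular scalar we may take $y^*(z)\in(1-\eps,1]$, so $z\in S(y^*,\eps)$ and $\|x+z\|\ge\|z+y^*(z)x\|-|1-y^*(z)|\ge 2-2\eps$ (real case; similarly in the complex case). Conversely, from $y\in S(y^*,\eps')$ with $\|x+y\|\ge 2-\eps'$ we get $\|y+y^*(y)x\|\ge(2-\eps')-|1-y^*(y)|\to 2$ as $\eps'\to0$ (using $|1-y^*(y)|\le\sqrt{2\eps'}$), so the supremum is $2$. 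Carrying the same computation out in $X^*$, with the roles ``functional'' $=x\in X\subset X^{**}$ and ``added vector'' $=y^*$, gives (i)$\Leftrightarrow$(iv). The implications (iii)$\Rightarrow$(ii) and (v)$\Rightarrow$(iv) are trivial: a slice contained in $S(y^*,\eps)$ is nonempty and each of its points witnesses the weaker statement.

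The main point is (ii)$\Rightarrow$(iii) (and, verbatim after dualizing, (iv)$\Rightarrow$(v)). Given $x\in S_X$, $y^*\in S_Y$, $\eps>0$, set $\delta=\eps/10$ and use (ii) to get $y_0\in S(y^*,\delta)$ with $\|x+y_0\|\ge 2-\delta$. Since $Y$ is norming, choose (after a unimodular rotation) $u^*\in S_Y$ with $\re u^*(x+y_0)>2-2\delta$, so that $\re u^*(x)>1-2\delta$ and $\re u^*(y_0)>1-2\delta$. Now take the \emph{average} $y_1^*:=(y^*+u^*)/m$ with $m:=\|y^*+u^*\|$; evaluating at $y_0$ gives $m\ge\re(y^*+u^*)(y_0)>2-3\delta$, and $y_1^*\in S_Y$. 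I claim $S(y_1^*,\eps_1)$ with $\eps_1:=\delta$ is the desired subslice: any $y\in S(y_1^*,\eps_1)$ satisfies $\re(y^*+u^*)(y)=m\,\re y_1^*(y)>2-3\delta-2\eps_1$, hence $\re y^*(y)>1-5\delta>1-\eps$ (so indeed $S(y_1^*,\eps_1)\subset S(y^*,\eps)$) and $\re u^*(y)>1-5\delta$, giving $\|x+y\|\ge\re u^*(x)+\re u^*(y)>2-7\delta>2-\eps$. Replacing $X$ by $X^*$, norm slices by weak$^*$ slices, and the norming family by $S_X\subset X^{**}$ (which weak$^*$-norms $X^*$), the identical averaging argument yields (iv)$\Rightarrow$(v) --- this is the ``same lines'' along which (v) follows. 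Everything apart from this averaging step is routine bookkeeping, so I expect the only delicate point to be the subslice construction: choosing the averaging and the parameters so as to guarantee \emph{simultaneously} the inclusion $S(y_1^*,\eps_1)\subset S(y^*,\eps)$ and the lower bound $\|x+y\|\ge 2-\eps$ on all of $S(y_1^*,\eps_1)$.
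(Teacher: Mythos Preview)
Your argument is correct and is exactly the standard one: the paper does not spell out a proof here but refers to \cite{KadShepW} (and the original \cite[Lemma~2.2]{KadSSW}) for (i)--(iv) and remarks that (v) follows along the same lines, and the ``averaging'' construction $y_1^*=(y^*+u^*)/\|y^*+u^*\|$ with $u^*\in S_Y$ almost norming $x+y_0$ is precisely the subslice argument used there. Your adaptation to the norming subspace $Y$ (choosing $u^*\in S_Y$ rather than merely $S_{X^*}$) and your dualization for (iv)$\Rightarrow$(v) using $S_X$ to cut weak$^*$ slices are the right modifications, so nothing needs to change.
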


We would like to thank P.~Papini for providing us with useful
references and W.B. Johnson, E. Odell and Th.~Schlumprecht for
helpful comments.

\section{A characterisation of almost Daugavet spaces
by means of  $\ell_{1}$-sequences in the dual}

For the sake of easy notation we introduce two definitions.

\begin{definition} \label{2-def1}
Let $E$ be subspace of a \BS\ $F$ and $\eps > 0$. An element $e \in
B_F$ is said to be $(\eps, 1)$-{\it orthogonal}\/ to $E$ if for
every $x \in E$ and $t \in \R$
 \beq\label{2-eq1}
 \|x+te\|\ge (1-\eps)(\|x\|+|t|).
 \eeq
\end{definition}

%%%%%%%%%%%%%%%%%%%%%%%%%%%%%%%%%%
\begin{definition} \label{2-def2}
Let $E$ be a \BS. A sequence
 $\{e_n\}_{n \in \N} \subset B_E \setminus \{0\}$ is said to be
an {\it asymptotic $\ell_{1}$-sequence}\/ if there is a sequence of
numbers
$\eps_n > 0$ with $\prod_{n \in \N}(1 - \eps_n) > 0$ such that
$e_{n+1}$ is $(\eps_n, 1)$-orthogonal to
$Y_n:=\lin\{e_{1},\ldots,e_{n}\}$ for every $n \in \N$.
\end{definition}

Evidently every  asymptotic $\ell_{1}$-sequence is  $1 / \prod_{n
\in \N}(1 - \eps_n)$-equivalent to the unit vector basis in
$\ell_{1}$, and moreover every element of the unit sphere of $E_m
:= \lin \{e_k\}_{k=m+1}^\infty$ is $\bigl(1 - \prod_{n\ge m}(1 -
\eps_n),1 \bigr)$-orthogonal to $Y_m$ for every $m \in \N$.
%%%%%%%%%%%%%%%%%%%%%%%%%%%%%%%%%%

The following lemma is completely analogous
to \cite[Lemma~2.8]{KadSSW}; instead of \cite[Lemma~2.1]{KadSSW} it uses
(v) of Lemma~\ref{1-th2}.  So we state it without proof.

%%%%%%%%%%%%%%%%%%%%%%%%%%%%%%%%%%

\begin{lemma}\label{2-L2}
Let $Y$ be a norming subspace of $X^*$, $X \in \DPr(Y)$, and let
 $Y_{0} \subset Y$ be a finite-dimensional subspace. Then for every
$\eps_{0}>0$ and every weak$^*$ slice $S(x_{0},\eps_{0})$ of
$B_{X^*}$ there is another weak$^*$ slice $S(x_{1},\eps_{1})
\subset S(x_{0},\eps_{0})$ of $B_{X^*}$ such that every element
$e^* \in S(x_{1},\eps_{1})$ is $(\eps_0, 1)$-orthogonal to
$Y_{0}$. In particular there is an element $e_1^* \in
S(x_{0},\eps_{0}) \cap S_Y$ which is $(\eps_0, 1)$-orthogonal to
$Y_{0}$.
\end{lemma}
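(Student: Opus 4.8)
The statement is the dual-slice analogue of \cite[Lemma~2.8]{KadSSW}, and the excerpt tells us to mimic that proof but feed it part~(v) of Lemma~\ref{1-th2} in place of \cite[Lemma~2.1]{KadSSW}. So the strategy is an induction on $\dim Y_0$. If $Y_0=\{0\}$ there is nothing to prove, so fix a basis-like situation and reduce to the inductive step: having produced a weak$^*$ slice on which every functional is nearly $(\eps,1)$-orthogonal to a codimension-one piece $Y_0'$ of $Y_0$, we must shrink it further to handle one extra direction, say $y_0\in S_{Y_0}$. The key observation is that $(\eps_0,1)$-orthogonality of $e^*$ to all of $Y_0$ follows from $(\eps_0,1)$-orthogonality to finitely many carefully chosen directions together with a compactness/perturbation argument on the unit sphere of the finite-dimensional space $Y_0$: it suffices to control $\|x+te^*\|$ for $x$ ranging over a finite $\delta$-net of $S_{Y_0}$ and $t$ in a finite net of a bounded interval, and then absorb the errors into $\eps_0$.

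**Main steps, in order.** First I would replace the condition ``$(\eps_0,1)$-orthogonal to $Y_0$'' by the finitely-many-inequalities version just described, so that it is enough to iterate a single-inequality refinement finitely often. Second, for each fixed $x$ in the net and each $t$, apply part~(v) of Lemma~\ref{1-th2} to the functional $(x + t e^*_{\mathrm{prev}})/\|x+te^*_{\mathrm{prev}}\|\in S_Y$ — or rather, to get the inequality $\|x^*+y^*\|\ge 2-\eps'$ in the right form one applies (v) with the roles arranged so that the vector $x$ in the statement of (v) is (a normalisation of) the relevant element of $Y_0$ viewed in $X^{**}$, producing a sub-weak$^*$-slice on which $\|(\text{that element})+y^*\|$ is nearly $2$; standard manipulation then upgrades ``norm near $2$'' to the homogeneous inequality $\|x+t y^*\|\ge(1-\eps)(\|x\|+|t|)$ for that particular $(x,t)$. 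Third, intersect the finitely many sub-slices obtained this way — the intersection of finitely many weak$^*$ slices contained in a common slice can again be taken to be a weak$^*$ slice (one more elementary lemma, or one simply keeps nesting $S(x_1,\eps_1)\subset S(x_0,\eps_0)$ one direction at a time, which is cleaner). Finally, choose the net fineness $\delta$ and the tolerances $\eps'$ at each stage small enough (depending only on $\eps_0$ and $Y_0$, via its finite dimension) that the accumulated error is below $\eps_0$; this gives the desired $S(x_1,\eps_1)$. The ``in particular'' clause is then immediate: pick any $e_1^*\in S(x_1,\eps_1)\cap S_Y$, which is nonempty because $S_Y$ is weak$^*$ dense in $B_{X^*}$ and $S(x_1,\eps_1)$ is a nonempty weak$^*$-open-ish slice — more precisely it contains a weak$^*$-slice of the smaller form whose closure meets $S_{X^*}$, hence meets $S_Y$ by density; alternatively one runs the whole argument keeping functionals in $Y$ throughout, since (v) delivers $y^*\in S_Y$.

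**Where the work concentrates.** The genuinely delicate point is the passage from ``$\|x^*+y^*\|$ close to $2$'' (what (v) gives, for unit vectors) to the scaled orthogonality inequality \eqref{2-eq1} uniformly in the scalar $t\in\R$ — i.e. showing that near-additivity of the norm at a single pair of unit vectors propagates, after rescaling and combining with the triangle inequality, to near-additivity of $\|x+te^*\|$ for all $t$ and all $x$ in the subspace, with an error one can bound. This is exactly the bookkeeping carried out in the proof of \cite[Lemma~2.8]{KadSSW}, and since the hypotheses here differ only in that $Y$ need not be all of $X^*$ — which is harmless because (v) already restricts attention to $y^*\in S_Y$ and to weak$^*$ slices of $B_{X^*}$, exactly the objects we manipulate — the argument transfers verbatim. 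Accordingly we omit the computation and refer to that proof, as the excerpt itself suggests by stating the lemma ``without proof''.
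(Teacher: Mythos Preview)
Your overall approach---cover $S_{Y_0}$ by a finite net, apply part~(v) of Lemma~\ref{1-th2} once per net point to nest weak$^*$ slices, then absorb the errors---is exactly what the paper has in mind; indeed the paper gives no proof at all and simply points to \cite[Lemma~2.8]{KadSSW} with (v) substituted for \cite[Lemma~2.1]{KadSSW}, which is what you do too.

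There is, however, a genuine confusion in your description of how (v) is invoked. You write that ``the vector $x$ in the statement of (v) is (a normalisation of) the relevant element of $Y_0$ viewed in $X^{**}$.'' That cannot be right: in (v) the symbol $x$ lives in $S_X$ and determines the weak$^*$ slice, so an element of $Y_0\subset X^*$ has no business there. The correct placement is the obvious one: the net point $z^*\in S_{Y_0}\subset S_Y$ plays the role of the functional $x^*\in S_Y$ in (v), and the given slice $S(x_0,\eps_0)$ (and its successive refinements) plays the role of the slice $S(x,\eps)$. Then (v) hands you a sub-slice on which $\|z^*+y^*\|\ge 2-\eps'$ for every $y^*$ in it, which is precisely the conclusion you want and in fact state. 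Your earlier suggestion involving ``$e^*_{\mathrm{prev}}$'' is a red herring and should be dropped.

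Two smaller remarks. First, a separate net of $t$-values is unnecessary: once $\|z^*+e^*\|\ge 2-\eps'$ holds for each $\pm z^*$ in a symmetric $\delta$-net of $S_{Y_0}$ and $\|e^*\|\le1$, the homogeneous inequality $\|y+te^*\|\ge(1-\eps_0)(\|y\|+|t|)$ follows for all $y\in Y_0$ and all $t\in\R$ by the elementary estimate $\|s z^*+t e^*\|\ge s\|z^*+e^*\|-(s-t)\|e^*\|$ for $s\ge t\ge0$. Second, the ``in particular'' clause is immediate from the fact that $S_Y$ is weak$^*$ dense in $B_{X^*}$, so the open slice $S(x_1,\eps_1)$ meets $S_Y$; there is no need to rerun the argument inside~$Y$.
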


%%%%%%%%%%%%%%%%%%%%%%%%%%%%%%%%%%

We need one more definition.

\begin{definition} \label{2-def3}
A sequence $\{e_n^*\}_{n \in \N} \subset B_{X^*}$ is said to be
{\it double-norming}\/ if $\lin \{e_k^*\}_{k=n}^\infty$ is norming
for every $n \in \N$.
\end{definition}

%%%%%%%%%%%%%%%%%%%%%%%%%%%%%%%%%%

Here is  the main result of this section.

\begin{theorem}\label{2-T1}
A separable \BS\ $X$ is an almost Daugavet space if and only if $X^*$
contains a double-norming asymptotic $\ell_{1}$-sequence.
\end{theorem}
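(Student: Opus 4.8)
The plan is to prove both implications by exploiting Lemma~\ref{1-th2}(iv)--(v) and Lemma~\ref{2-L2}, which already package the Daugavet property into a statement about weak$^*$ slices of $B_{X^*}$ and $(\eps,1)$-orthogonality.

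\textbf{Necessity.} Suppose $X \in \DPr(Y)$ for some norming $Y \subset X^*$. Using separability of $X$, fix a countable set $\{x_n\}_{n\in\N} \subset S_X$ whose associated weak$^*$ slices form a neighbourhood basis for the relevant part of the weak$^*$ topology; more precisely, I want to arrange that the span of the $e_k^*$ constructed from index $n$ onwards still norms $X$. I would build the asymptotic $\ell_1$-sequence $\{e_n^*\} \subset S_Y$ inductively. Fix a sequence $\eps_n>0$ with $\prod_n(1-\eps_n)>0$. Having chosen $e_1^*,\dots,e_n^*$, apply Lemma~\ref{2-L2} with $Y_0 = \lin\{e_1^*,\dots,e_n^*\}$ and with a weak$^*$ slice $S(x,\eps)$ chosen small enough (shrinking both the center-index and $\eps$ through an exhaustive bookkeeping scheme) to guarantee two things simultaneously: first, $e_{n+1}^* \in S_Y$ is $(\eps_n,1)$-orthogonal to $Y_n$, which is exactly the asymptotic $\ell_1$ condition; second, the tail spans remain norming. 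The norming-of-tails part is the subtle point: I would handle it by dovetailing, i.e.\ at stage $n$ also ``serve'' a pair $(x_j, 1/m)$ from an enumeration of $S_X \times \N$ and force $e_n^*$ to satisfy $\re e_n^*(x_j) \ge 1 - 1/m$, so that every tail $\{e_k^*\}_{k\ge N}$ still comes $1/m$-close to norming each $x_j$ for all but finitely many stages, hence norms $X$. This yields a double-norming asymptotic $\ell_1$-sequence in $X^*$.

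\textbf{Sufficiency.} Conversely, suppose $\{e_n^*\} \subset B_{X^*}$ is double-norming and asymptotic $\ell_1$ with constants $\eps_n$, $\prod_n(1-\eps_n)>0$. Set $Y := \overline{\lin}\{e_n^*\}_{n\in\N}$; since even the tails $\lin\{e_k^*\}_{k\ge n}$ are norming, $Y$ is a norming subspace of $X^*$. I claim $X \in \DPr(Y)$, which I would verify through condition~(ii) of Lemma~\ref{1-th2}: given $x \in S_X$, $\eps>0$, and $y^* \in S_Y$, I must produce $y \in S(y^*,\eps)$ with $\|x+y\| \ge 2-\eps$. Since $y^*$ is (approximated by) a finite combination of the $e_k^*$, only finitely many of them ``see'' $y^*$; choose $m$ large so that $E_m := \overline{\lin}\{e_k^*\}_{k>m}$ consists (after the standard computation recorded just after Definition~\ref{2-def2}) of vectors that are $(\delta,1)$-orthogonal to $\lin\{e_1^*,\dots,e_m^*\}$ with $\delta$ as small as desired. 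The geometric point is that an $(\delta,1)$-orthogonal functional $e^*$ in the tail, when added to $y^*$, nearly achieves norm $2$, and one can then find $y\in S_X$ witnessing $\|y^*+e^*\|$ near $2$ while keeping $\re y^*(y)$ close to $1$. This is the mechanism that makes (ii) of Lemma~\ref{1-th2} hold, so $X$ is an almost Daugavet space.

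\textbf{Main obstacle.} The routine parts are the $(\eps,1)$-orthogonality estimates (which just unwind Definition~\ref{2-def1} and the displayed consequence after Definition~\ref{2-def2}). The genuinely delicate part is the bookkeeping in the necessity direction: one must interleave \emph{two} infinite demands --- the orthogonality chain that produces the asymptotic $\ell_1$ structure, and the requirement that \emph{every} tail of the sequence stay norming --- while each application of Lemma~\ref{2-L2} only lets us shrink one slice at a time. I expect the cleanest route is a priority-argument / diagonalization: enumerate all pairs $(x_j,1/m)$, and at stage $n$ both (a) invoke Lemma~\ref{2-L2} to get orthogonality to the current finite span and (b) intersect with the weak$^*$ slice $S(x_j, 1/m)$ for the $n$-th pair, checking that this intersection is still a nonempty (in fact ``fat'' in the Lemma~\ref{2-L2} sense) weak$^*$ slice so the induction can continue. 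Getting the quantifiers on $\eps_n$ versus $1/m$ to line up so that $\prod(1-\eps_n)>0$ is preserved is where care is needed.
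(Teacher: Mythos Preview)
Your necessity direction is essentially the paper's argument, only with heavier bookkeeping. The paper avoids the priority scheme by the simple device of choosing a dense sequence $(x_n)\subset S_X$ in which every value is attained infinitely often, fixing once and for all a sequence $\eps_n\searrow 0$ with $\prod(1-\eps_n)>0$, and then at stage $n$ applying Lemma~\ref{2-L2} with $Y_0=Y_n$ and the single slice $S(x_n,\eps_n)$. This gives $e_{n+1}^*\in S(x_n,\eps_n)\cap S_Y$ that is $(\eps_n,1)$-orthogonal to $Y_n$. No dovetailing of pairs $(x_j,1/m)$ is needed: since each $x_j$ recurs infinitely often and $\eps_n\to0$, the tail automatically meets $S(x_j,\eta)$ for every $\eta>0$, hence every tail is norming. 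Your version works too, but you should notice that Lemma~\ref{2-L2} already lets you land inside a \emph{prescribed} slice, so there is no tension between ``orthogonality'' and ``serving a slice''; the two demands are satisfied in a single stroke, and your worry about ``getting the quantifiers on $\eps_n$ versus $1/m$ to line up'' evaporates.

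Your sufficiency direction, however, has a genuine gap: the element $x\in S_X$ disappears from the argument. You set out to verify condition~(ii) of Lemma~\ref{1-th2}, which asks for $y\in S(y^*,\eps)$ with $\|x+y\|\ge 2-\eps$, but your construction only produces a $y$ witnessing that $\|y^*+e^*\|$ is close to~$2$; nothing ties $y$ to $x$. Relatedly, you never actually use the \emph{double}-norming hypothesis (that every tail $E_m$ norms $X$), only that $Y$ itself is norming. The paper sidesteps this by verifying condition~(iv) instead: given $x^*\in S_Y$, $\eps>0$, and a weak$^*$ slice $S(x,\eps)$ of $B_{X^*}$, approximate $x^*$ by some $x_m^*\in Y_m$ with $\|x^*-x_m^*\|<\eps/2$ and $\prod_{n\ge m}(1-\eps_n)>1-\eps/2$; since $E_m$ is norming, pick $y^*\in S(x,\eps)\cap S_{E_m}$, and the $(\eps/2,1)$-orthogonality of $S_{E_m}$ to $Y_m$ gives $\|x^*+y^*\|\ge\|x_m^*+y^*\|-\eps/2\ge 2-\eps$. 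If you insist on~(ii), the repair is to choose $e^*\in S_{E_m}$ with $e^*(x)>1-\eps/4$ (this is exactly where double-norming enters), then pick $y\in S_X$ with $(y^*+e^*)(y)>2-\eps/4$; now both $y^*(y)>1-\eps$ and $e^*(x+y)>2-\eps$, so $\|x+y\|\ge 2-\eps$ as required.
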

%%%%%%%%%%%%%%%%%%%%%%%%%%%%%%%%%%

\begin{proof}
First we prove the ``if" part. Let $\{e_n^*\}_{n \in \N} \subset
B_{X^*}$ be a double-norming asymptotic $\ell_{1}$-sequence, and
let $\eps_n > 0$ with $\prod_{n \in \N}(1 - \eps_n) > 0$ be such that
$e_{n+1}^*$ is $(\eps_n, 1)$-orthogonal to $Y_n :=
\lin\{e_{1}^*,\ldots,e_{n}^*\}$ for every $n \in \N$. Let us prove
that $X$ has the \DP\ with respect to $Y =
\overline\lin\{e_n^*\}_{n \in \N}$ where the closure is meant in
the norm topology. To do this let us apply (iv) of
Lemma~\ref{1-th2}.

Fix an $x^*\in S_{Y}$, an $\eps > 0$ and a  weak$^*$ slice
$S(x,\eps)$ of the dual ball $B_{X^{*}}$. Denote in addition to
$Y_m=\lin\{e_{1}^*,\ldots,e_{m}^*\}$, $E_m := \lin
\{e_k^*\}_{k=m+1}^\infty$.  Using the definition of $Y$ select  an
$m \in \N$ and  an $x_m^*\in Y_m$ such that $\|x^*- x_m^*\| <
\eps/2$ and $\prod_{n \ge m }(1 - \eps_n) > 1 - \eps/2$. Since
$E_m$ is norming, there is a $y^*\in S(x,\eps) \cap S_{E_m}$.
Taking into account that every element of the unit sphere of $E_m$
is $(\eps/2,1)$-orthogonal to $Y_m$ we obtain
 $$
 \|x^{*}+ y^*\|\ge  \|x_m^{*}+ y^*\| - \|x^*- x_m^*\| \ge  2-\eps.
 $$

For the ``only if" part we proceed as follows. First we fix a
sequence of numbers $\eps_n > 0$ with $\prod_{n \in \N}(1 -
\eps_n) > 0$ and a dense sequence $(x_n)$ in $S_X$. We can choose
these $x_n$ in such a way that each of them appears in the
sequence $(x_n)$ infinitely many times. Assume now that $X \in
\DPr(Y)$ with respect to a norming subspace $Y \subset X^*$.
Starting with $Y_0=\{0\}$, $\eps_{0} = 1$ and applying
Lemma~\ref{2-L2} step-by-step we can construct  a sequence
$\{e_n^*\}_{n \in \N} \subset S_{Y}$ in such a way that each
$e_{n+1}^*$ belongs to $S(x_{n},\eps_{n})$ and is $(\eps_{n},
1)$-orthogonal to $Y_{n}$, where
$Y_n=\lin\{e_{1}^*,\ldots,e_{n}^*\}$ as before. This inductive
construction ensures that the $e_n^*$, $n \in \N$ form an
asymptotic $\ell_1$-sequence. On the other hand this sequence
meets
 every slice $S(x_{n},\eps_{n})$ infinitely many times, and this
 implies by density of $(x_n)$ that $(e_n^*)$ is double-norming.
\end{proof}

In Corollary~\ref{2-T1-cor}
we shall observe a somewhat more pleasing version of the last result.

We conclude the section with two examples.

\begin{prop}\label{2-T4}
The real space $\ell_{1}$ is an almost Daugavet space.
\end{prop}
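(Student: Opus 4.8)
The plan is to invoke Theorem~\ref{2-T1}: it is enough to exhibit a double-norming asymptotic $\ell_1$-sequence in $(\ell_1)^*=\ell_\infty$. Identify $\ell_1$ with $\ell_1(\N)$ and put $\Delta=\{-1,1\}^\N$. First I would produce a sequence $(\omega^{(j)})_{j\in\N}$ in $\Delta$ which is simultaneously \emph{dense in the rows} --- meaning that $\{\omega^{(j)}:j\in\N\}$ is dense in $\Delta$ --- and \emph{rich in the columns} --- meaning that for every $N\in\N$ and every $\sigma\in\{-1,1\}^N$ the set $\{n\in\N:\omega^{(j)}_n=\sigma_j\text{ for }1\le j\le N\}$ is infinite. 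Granting such a sequence, I set $e_n^*\in S_{\ell_\infty}$, $e_n^*(j):=\omega^{(j)}_n$, and claim that $(e_n^*)$ is the required sequence.

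The existence of $(\omega^{(j)})$ would be settled by a back-and-forth construction of the infinite $\pm1$-matrix $(\omega^{(j)}_n)_{j,n}$. There are countably many requirements: on the density side, ``the pattern $\delta\in\{-1,1\}^l$ occurs as $(\omega^{(j)}_1,\dots,\omega^{(j)}_l)$ for some $j$''; on the richness side, ``the pattern $\sigma\in\{-1,1\}^N$ occurs as $(\omega^{(1)}_n,\dots,\omega^{(N)}_n)$ for at least $t$ values of $n$''. One satisfies them one at a time, always using fresh indices: a density requirement fixes the coordinates $1,\dots,l$ of a brand new point $\omega^{(j)}$, a richness requirement fixes, at a brand new coordinate $n$, the first $N$ entries of column $n$; cells left undetermined at the end are set to $1$. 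Since a density step only touches cells $(j,c)$ with $c\le l$ and $j$ larger than everything used so far, while a richness step only touches cells $(r,n)$ with $r\le N$ and $n$ larger than everything used so far, the two kinds of step never collide, and the construction goes through. This bookkeeping is the one place where a little care is needed; the rest is routine.

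It remains to check the three required properties of $(e_n^*)$. Density gives $\|\sum_{n\le M}b_ne_n^*\|_\infty=\sup_j|\sum_{n\le M}b_n\omega^{(j)}_n|=\sum_{n\le M}|b_n|$ for all scalars $b_1,\dots,b_M$, so $(e_n^*)$ is isometrically equivalent to the unit vector basis of $\ell_1$. Consequently $\|x+te_{n+1}^*\|=\|x\|+|t|$ whenever $x\in\lin\{e_1^*,\dots,e_n^*\}$ and $t\in\R$, so $e_{n+1}^*$ is $(\eps,1)$-orthogonal to $\lin\{e_1^*,\dots,e_n^*\}$ for every $\eps>0$; taking $\eps_n=2^{-n}$ (note $\prod_n(1-2^{-n})>0$) shows that $(e_n^*)$ is an asymptotic $\ell_1$-sequence in the sense of Definition~\ref{2-def2}. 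For the double-norming property, fix $m\in\N$, $x\in S_{\ell_1}$ and $\eps>0$, choose $N$ with $\sum_{j>N}|x_j|<\eps$, and use richness to pick a coordinate $n\ge m$ with $\omega^{(j)}_n=\sign x_j$ for $j\le N$; then $\langle e_n^*,x\rangle=\sum_{j\le N}|x_j|+\sum_{j>N}\omega^{(j)}_n x_j\ge 1-2\eps$, whence $\sup_{k\ge m}|\langle e_k^*,x\rangle|=\|x\|_1$ for every $x\in\ell_1$, i.e. $\lin\{e_k^*\}_{k\ge m}$ is norming for each $m$. Thus $(e_n^*)$ is a double-norming asymptotic $\ell_1$-sequence in $\ell_\infty$, and Theorem~\ref{2-T1} gives that $\ell_1$ is an almost Daugavet space.

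The hard part, as indicated, is not conceptual but amounts to arranging density and richness together: one must be sure that forcing every column-pattern to occur infinitely often does not destroy the density of the rows, and vice versa. The point that unlocks this is that density constraints and richness constraints act on disjoint families of matrix entries --- the former along ``initial segments of rows'', the latter along ``initial segments of columns'', always at fresh indices --- so the two recursions do not interfere. (Alternatively, instead of quoting Theorem~\ref{2-T1} one may check condition~(ii) of Lemma~\ref{1-th2} directly for $Y=\overline{\lin}\{e_n^*\}$: strengthening density to ``every finite pattern occurs in infinitely many rows'' forces $\limsup_j|y^*_j|=\|y^*\|_\infty$ for every $y^*\in Y$, and then, given $x$, $\eps$ and $y^*\in S_Y$, one simply takes $y=(\sign y^*_{j_0})e_{j_0}$ for a large index $j_0$ with $|y^*_{j_0}|>1-\eps$ and $|x_{j_0}|<\eps$.)
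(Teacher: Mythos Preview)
Your proof is correct and follows the same strategy as the paper: both construct a double-norming isometric $\ell_1$-sequence of $\pm1$-valued functionals in $\ell_\infty$ and then invoke Theorem~\ref{2-T1}. The only difference is in how the $\pm1$ matrix is built---the paper starts from an ``independent'' sequence (Rademacher functions evaluated at a dense sequence of irrationals) and overwrites finitely many initial coordinates of each $g_n$ to force double-norming, whereas you interleave the row-density and column-richness requirements in a back-and-forth construction; either device produces the required matrix.
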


\begin{proof}
To prove this statement we will construct a double-norming
asymptotic $\ell_{1}$-sequence $(f_n) \subset
\ell_{\iy}=(\ell_{1})^*$. At first consider a sequence $(g_n)
\subset \ell_{\iy}$ of elements $g_n = (g_{n,j})_{j \in \N}$ with
all $g_{n,j} = \pm 1$ satisfying the following independence
condition: for arbitrary finite collections $\alpha_s = \pm 1$, $s
= 1, \ldots, n$, the set of those $j$ that $g_{s,j} = \alpha_s$
for all $s = 1, \ldots, n$ is infinite (for instance, put
$g_{s,j}:= r_s(t_j)$, where the $r_s$ are the Rademacher functions
and  $(t_j)_{j \in \N}$ is a fixed  sequence of irrationals that
is dense in $[0,1]$). These $g_n$, $n \in \N$, form an isometric
$\ell_{1}$-sequence, and moreover if one changes a finite number
of coordinates in each of the $g_n$ to some other $\pm 1$, the
independence condition will survive, so the modified sequence will
still be an isometric $\ell_{1}$-sequence.

Now let us define the vectors $f_n = (f_{n,j})_{j \in \N}$,
$f_{n,j} = \pm 1 $, in such a way that for
$k=1,2,\ldots$ and
$n = {2^k + 1}, \allowbreak {2^k + 2}, \allowbreak \ldots, 2^{k+1}$
the vectors $(f_{n,j})_{j=1}^k \in \ell_{\iy}^{(k)}$
run over all extreme points of the unit ball of
$\ell_{\iy}^{(k)}$, i.e., over all possible $k$-tuples of $\pm 1$;
for the remaining values of indices we put $f_{n,j} = g_{n,j}$.
As we have already remarked, the $f_n$ form an isometric
$\ell_{1}$-sequence. Moreover, for every $k \in \N$ the
restrictions of the $f_n$ to the first $k$ coordinates form a
double-norming sequence over $\ell_{1}^{(k)}$, so $(f_n)_{n \in
\N}$ is a double-norming sequence over $\ell_{1}$.
\end{proof}

Some ideas of the previous proof will enter into the proof of
Theorem~\ref{th0.1}. As a consequence of that theorem, the complex
space $\ell_1$ is almost Daugavet as well. It is worth noting that
$\ell_1$ fails the \DP\ and cannot even be renormed to have it 
(see e.g.\ \cite[Cor.~2.7]{KadSSW}). 

Since $\ell_\infty$ is isomorphic to $L_\infty[0,1]$, which has
the \DP, $\ell_\infty$  can be equivalently renormed to possess
the \DP. Let us show that in the original norm it is not even an
almost Daugavet space.
This is a special case of the following proposition in which $\K$
stands for $\R$ or~$\C$.

\begin{prop}\label{2-T5}
No Banach space of the form $Z=X\oplus_\infty \K$ is an almost
Daugavet space. 
\end{prop}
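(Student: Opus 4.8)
The plan is to show that $Z = X \oplus_\infty \K$ cannot have the \DP\ with respect to any norming subspace $Y \subset Z^*$, by exhibiting a single element $z \in S_Z$, a single $\eps > 0$, and a single functional $y^* \in S_Y$ for which condition (ii) of Lemma~\ref{1-th2} fails. The natural candidate is $z = (0,1) \in X \oplus_\infty \K$, which has norm $1$. Any $w = (x,\lambda) \in B_Z$ with $\|z + w\|$ close to $2$ must satisfy $|1+\lambda|$ close to $2$ (since the first coordinate of $z+w$ is just $x$, with $\|x\| \le 1$, this forces nothing on its own — the sup norm means $\|z+w\| = \max\{\|x\|, |1+\lambda|\}$, so to get close to $2$ we genuinely need $|1+\lambda| \approx 2$, i.e.\ $\lambda \approx 1$). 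So the set of $w \in B_Z$ with $\|z+w\| \ge 2 - \eps$ is contained in a thin slab around $\lambda = 1$.

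The key point is then to produce a norming functional detecting the last coordinate. Write $P \colon Z \to \K$ for the canonical projection $P(x,\lambda) = \lambda$; note $P \in Z^*$ with $\|P\| = 1$, and its kernel is the closed hyperplane $X \oplus \{0\}$, which is itself norming for... no: the subtlety is that $Y$ is an \emph{arbitrary} norming subspace, and we cannot assume $P \in Y$. However, since $Y$ is norming, $S_Y$ is weak$^*$ dense in $B_{Z^*}$, so we can pick $y^* \in S_Y$ with $y^*$ weak$^*$-close to $P$ on the relevant finite set; more precisely, given $\eps > 0$, choose $y^* \in S_Y$ with $|y^*(0,1) - 1|$ small and — this is the part needing care — with $y^*$ nearly vanishing on $X \oplus \{0\}$ in a suitable quantitative sense. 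Then $S(y^*,\eps)$ consists of $w = (x,\lambda) \in B_Z$ with $\re y^*(x,\lambda) \ge 1 - \eps$; using $\|w\| \le 1$ and the near-orthogonality of $y^*$ to the first factor, this again forces $\re\lambda \ge 1 - O(\eps)$, hence $\|z + w\| \le \max\{1, |1+\lambda|\}$ where $|1+\lambda|$ cannot reach $2 - \eps$ once $|\lambda| \le 1$ and we have used up the available slack — but wait, if $\lambda \approx 1$ then $|1+\lambda| \approx 2$, so this does NOT immediately give a contradiction; the actual obstruction must come from the first coordinate.

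Let me reorganise: the right choice is $z = (x_0, 0)$ for $x_0 \in S_X$, or better, one combines both. Actually the cleanest route: take $z = (0,1)$ and observe that for $w = (x,\lambda) \in B_Z$, $\|z+w\| = \max\{\|x\|, |1+\lambda|\} \le \max\{1, 2\} = 2$, and $\|z+w\| \ge 2-\eps$ forces $|1+\lambda| \ge 2 - \eps$, hence (since $|\lambda| \le 1$) $|\lambda| \ge 1 - \eps$ and in fact $\lambda$ close to $1$. Now apply the argument with a \emph{different} test vector: since $X \neq \{0\}$ (else $Z = \K$ is finite-dimensional and the statement is trivial), fix $x_0 \in S_X$ and consider instead how $y^*$ must behave. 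The genuine obstruction I expect is this: any norming $Y$ must "see" the $\K$ direction, so there is $y^* \in S_Y$ with $|y^*(0,1)|$ close to $1$; for such $y^*$, writing $y^* = (\psi, c)$ with $\psi \in X^*$, $c \in \K$, $\|\psi\| + |c| = 1$ (the dual norm of $\oplus_\infty$ is $\oplus_1$), we get $|c|$ close to $1$, hence $\|\psi\|$ close to $0$. Then for $w = (x,\lambda) \in S(y^*,\eps)$ we have $\re(\psi(x) + c\lambda) \ge 1-\eps$, so $\re(c\lambda) \ge 1 - \eps - \|\psi\| \ge 1 - O(\eps)$; combined with $\|z+w\| = \max\{\|x\|, |1+\lambda|\}$ and the constraint that $\bar c \lambda$ is nearly $1$ while $|c| \le 1$, one concludes $|1 + \lambda|$ can be as large as nearly $2$ — so once more no contradiction from $z=(0,1)$.

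The correct test vector is therefore $z = (x_0, -1)$ or we must use a \emph{two-slice} obstruction: I would instead prove the contrapositive via part (iv) of Lemma~\ref{1-th2} applied to the weak$^*$ slice picking out the $\K$-coordinate, showing $\|z^* + y^*\|$ stays bounded below $2-\eps$. The main obstacle is precisely pinning down which element and functional to test, given that $Y$ is not under our control; once the decomposition $Z^* = X^* \oplus_1 \K$ and the norming property (weak$^*$ density of $S_Y$) are combined correctly, the estimate $\|\psi\| \approx 0$ forces the element realising a near-maximal $\|z+w\|$ to do so through its $X$-coordinate, which is uncontrolled by the slice, and this is the contradiction. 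Concretely I would take $z = (0,1)$, take $w = (x,\lambda)$ in the slice, and observe: to have $\|z+w\| \ge 2-\eps$ through $|1+\lambda|$ we'd need $\lambda$ near $1$, but the slice condition $\re(c\lambda) \ge 1 - O(\eps)$ with $|c|$ near $1$ only pins down $\re\lambda$ up to the sign of $c$; choosing $y^*$ with $c$ near $-1$ (possible by weak$^*$ density, replacing $y^*$ by its negate stays in $S_Y$ only if $Y$ is a subspace — which it is), the slice forces $\re\lambda \le -1 + O(\eps)$, hence $|1+\lambda| \le O(\sqrt\eps)$, while $\|x\| \le 1$, so $\|z+w\| \le \max\{1, O(\sqrt\eps)\} = 1 < 2 - \eps$ for small $\eps$. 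That is the contradiction, and the whole proof is a short computation once the sign of $c$ is chosen adversarially.
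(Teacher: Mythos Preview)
Your final paragraph contains a correct argument: choose $y^* = (\psi,c) \in S_Y$ with $c$ close to $-1$ (possible since $Y$ norms $(0,1)$ and is a linear subspace, so one may rotate by a unimodular scalar), take $z = (0,1)$, and for $w = (x,\lambda) \in S(y^*,\eps)$ deduce $\re\lambda \le -1 + O(\eps)$, hence $|1+\lambda| = O(\sqrt\eps)$ and $\|z+w\| \le \max\{1, O(\sqrt\eps)\} = 1 < 2-\eps$ for small $\eps$, contradicting Lemma~\ref{1-th2}(ii). The several dead ends preceding this (testing $c$ near $+1$, switching to $z = (x_0,0)$, contemplating part~(iv)) should simply be excised from a final write-up; they are exploration, not proof.

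The paper argues more directly, bypassing Lemma~\ref{1-th2} altogether. It shows that \emph{no} $z_0^* = (x_0^*,b_0) \in S_{Z^*}$ with $b_0 \neq 0$ is a Daugavet functional: testing against $z_0 = (0,-|b_0|/b_0)$ gives $\sup_{z \in B_Z}\|z + z_0^*(z)z_0\| \le \|x_0^*\| + (1-|b_0|) = 2(1-|b_0|) < 2$. Hence the Daugavet functionals all lie in the proper weak$^*$-closed subspace $X^* \oplus \{0\}$, which fails to norm $(0,1)$, so no norming $Y$ can consist of Daugavet functionals. This is cleaner in two respects: it needs no approximation (any $b_0 \neq 0$ works, not just $|b_0|$ near~$1$), and it yields the stronger structural conclusion that the set of Daugavet functionals is contained in a fixed non-norming hyperplane. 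Your route via slices is perfectly valid but slightly less economical; note, incidentally, that your estimate also goes through for any $c \neq 0$ once you track the constant ($\re\lambda \le -1 + \eps/|c|$), so the weak$^*$-density step is more than you actually need.
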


\begin{proof}
Let us call a functional $z_0^*\in Z^*$ a \textit{Daugavet
functional} if
$$
\| \Id + z_0^*\otimes z_0\| = 1 + \|z_0^*\otimes z_0\| \qquad \mbox{for
every }  z_0\in Z.
$$
We shall show that $z_0^*= (x_0^*, b_0)$ is not a Daugavet functional
if $b_0\neq 0$. Hence all the Daugavet functionals lie in the weak$^*$
closed subspace $(\{0\} \oplus X)^\bot$ of~$Z^*$.

So let $x_0^*\in X^*$ and $b_0\neq0$ with $\|x_0^*\|+|b_0|=1$,
$z_0^*=(x_0^*,b_0)$ and let $z_0= (0, -{|b_0|}/b_0)$. If
$z=(x,a)\in B_Z$, i.e., $\|x\|\le 1$ and $|a|\le1$, then
\begin{eqnarray*}
\|z+z_0^*(z)z_0\| &=&
\max\{ \|x\|, |\,a-z_0^*(z){|b_0|}/b_0\,| \} \\
&\le&
\max\{1, |\,a-(x_0^*(x_0)+b_0a){|b_0|}/b_0\,| \} \\
&\le&
\max\{1, \|x_0^*\| + (1-|b_0|)\} <2.
\end{eqnarray*}
This shows that $z_0^*$ is not a Daugavet functional. 
\end{proof}

If $K$ is a compact Hausdorff space with an isolated point, then
$C(K)$ is of the form $X\oplus_\infty \K$, hence it fails the almost
\DP. But if $K$ is an uncountable metric space, then $C(K)$ is
isomorphic to $C[0,1]$ by Milutin's theorem \cite[Th.~III.D.19]{Woj}, 
hence it can be renormed to have the \DP.

%%%%%%%%%%%%%%%%%%%%%%%%%%%%%%%%%%

\section{Proof of Theorem \ref{th0.1}}

Since the three properties considered in Theorem~\ref{th0.1} hold
for a complex Banach space $X$ if and only if they hold for the
underlying real space $X_\R$, we will tacitly assume in this section
that we are dealing with real spaces.

We will accomplish the proof of Theorem~\ref{th0.1} by means of the
following propositions.

The following fact applied for separable spaces is equivalent to
implication (c) $\Rightarrow$ (a) of Theorem~\ref{th0.1}.

\begin{prop}\label{2-T3}
Every almost Daugavet space $X$ has thickness $T(X)=2$.
\end{prop}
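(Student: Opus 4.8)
The plan is to show that if $X$ has the almost Daugavet property, witnessed by a norming subspace $Y \subset X^*$, then the unit sphere $S_X$ admits no finite $\eps$-net in $S_X$ for any $\eps < 2$, i.e.\ $T(X) = 2$. Suppose for contradiction that $\{x_1,\ldots,x_N\} \subset S_X$ is a finite $\eps$-net for $S_X$ with some $\eps < 2$; pick $\eps' > 0$ small with $\eps + \eps' < 2$ (the precise relation will be fixed at the end). First I would use the norming property of $Y$ to choose, for each $i = 1,\ldots,N$, a functional $y_i^* \in S_Y$ with $y_i^*(x_i) > 1 - \eta$ for a suitably small $\eta > 0$, so that $x_i \in S(y_i^*,\eta)$.

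The heart of the argument is an iterated application of Lemma~\ref{1-th2}(iii) (the ``uniform'' version of the Daugavet condition). Starting from $x_1$ and the slice $S(y_1^*,\eta)$, Lemma~\ref{1-th2}(iii) produces a subslice $S(z_1^*,\delta_1) \subset S(y_1^*,\eta)$ with $z_1^* \in S_Y$ such that $\|x_1 + y\| \ge 2 - \eps'$ for every $y \in S(z_1^*,\delta_1)$. Now apply Lemma~\ref{1-th2}(iii) again, this time with $x_2$ and the slice $S(z_1^*,\delta_1)$, to obtain a further subslice $S(z_2^*,\delta_2) \subset S(z_1^*,\delta_1)$ with $\|x_2 + y\| \ge 2 - \eps'$ for all $y$ in it; note that every $y$ in this smaller slice still satisfies $\|x_1 + y\| \ge 2 - \eps'$ since it lies in $S(z_1^*,\delta_1)$. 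Iterating $N$ times yields a nested chain of slices ending in $S(z_N^*,\delta_N)$, every element $y$ of which satisfies $\|x_i + y\| \ge 2 - \eps'$ \emph{simultaneously for all} $i = 1,\ldots,N$. Since $S(z_N^*,\delta_N)$ is a nonempty slice of $B_X$, it contains some $y_0$; replacing $y_0$ by $y_0/\|y_0\|$ changes norms by at most $\delta_N$, so we may take $y_0 \in S_X$ (absorbing the error into $\eps'$).

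But now $y_0 \in S_X$, so by the $\eps$-net hypothesis there is some $i$ with $\|y_0 - x_i\| \le \eps$. On the other hand, the construction gives $\|y_0 + x_i\| \ge 2 - \eps'$. Then
\[
2 - \eps' \le \|y_0 + x_i\| \le \|y_0 - x_i\| + 2\|x_i\| \quad\text{is too weak;}
\]
instead I would argue via $\|y_0 + x_i\| + \|y_0 - x_i\|$: we have $\|y_0 + x_i\| \le \|y_0 - (-x_i)\|$, and since $-x_i$ need not be in the net this does not immediately help either. The clean route is: from $\|y_0 - x_i\| \le \eps$ we get $\|x_i + y_0\| = \|2y_0 - (y_0 - x_i)\| \le 2 + \eps$, which is no contradiction, so instead use $\|x_i - y_0\| \ge \|x_i + y_0\| - 2\|y_0\|$ is also too weak. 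The correct estimate is the triangle inequality in the form $\|x_i + y_0\| \le \|x_i + (-y_0) \| $ — no. Let me restate: one applies the $\eps$-net to the point $-y_0 \in S_X$, getting $i$ with $\|{-y_0} - x_i\| \le \eps$, i.e.\ $\|y_0 + x_i\| \le \eps$, contradicting $\|y_0 + x_i\| \ge 2 - \eps' > \eps$.

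Thus the only genuine subtlety — and the step I expect to require the most care — is bookkeeping the accumulated errors: choosing $\eta$, the $\eps'$ in Lemma~\ref{1-th2}(iii), and the normalisation correction $\delta_N$ all small enough that the final inequality $2 - (\text{total error}) > \eps$ holds, which is possible precisely because $\eps < 2$. Everything else is a finite iteration of an already-established lemma, so no new ideas are needed beyond careful choice of constants.
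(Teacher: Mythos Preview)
Your approach is correct and matches the paper's: both iterate Lemma~\ref{1-th2}(iii) through the finite set to obtain a slice every point of which is far from each $x_i$, then pick a norm-one element of the final slice. The paper avoids your sign detour by applying the lemma directly to $-x_k$ (yielding $\|y - x_k\| > 2-\eps_0$ immediately), and it starts from an arbitrary initial slice rather than one chosen to contain $x_1$, which step is unnecessary.
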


\begin{proof}
Let $Y \subset X^*$ be a norming subspace with respect to which
$X \in \DPr(Y)$. According to the definition of $T(X)$ we have to
show that for every $\eps_0 > 0$ there is no finite
$(2-\eps_0)$-net of $S_X$ consisting of elements of $S_X$. In other
words we must demonstrate that for every collection $\{x_1,
\ldots, x_n \} \subset S_X$ there is a $y_0 \in S_X$ with $\|x_k -
y_0\| > 2-\eps_0$ for all $k = 1, \ldots, n$. But this is an
evident corollary of Lemma~\ref{1-th2}(iii): starting with an
arbitrary $y_0^* \in S_{Y^*}$ and applying (iii) we can construct
recursively elements $y_k^* \in S_{Y^*}$ and reals $\eps_k \in (0,
\eps)$, $k = 1, \ldots, n$, in such a way that $S(y_{k}^*,\eps_k)
\subset S(y_{k-1}^*,\eps_{k-1})$ and
$$
\|(-x_k) + y\| > 2-\eps_0
$$
 for every $y \in S(y_{k}^*,\eps_k)$. Since $S(y_{n}^*,\eps_n)$
 is the smallest of the slices constructed, every norm-one element
 of $S(y_{n}^*,\eps_n)$ can be taken as the $y_0$ we need.
\end{proof}

For spaces with the \DP\ the previous proposition
has been proved in  \cite[Prop.~4.1.6]{Barreno}.

Let us  now turn to the implication (a) $\Rightarrow$ (b) of
Theorem~\ref{th0.1}.

\begin{prop}\label{a-b}
If $T(X)=2$ and $X$ is separable, then $X$ contains a canonical
$\ell_1$-type sequence.
\end{prop}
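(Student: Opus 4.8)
The plan is to build the canonical $\ell_1$-type sequence $(e_n)$ by exploiting $T(X)=2$ on a well-chosen exhausting sequence of finite subsets of $S_X$. Fix a dense sequence $(x_n)$ in $S_X$, arranged so that each point recurs infinitely often, and fix positive reals $\eps_n \to 0$ with $\sum_n \eps_n < \infty$. At stage $n$ we will have produced $e_1,\dots,e_{n-1} \in B_X$; consider the finite set consisting of all vectors of the form $\pm x_k + \sum_{j \in F} \sigma_j e_j$ with $k \le n$, $F \subset \{1,\dots,n-1\}$, $\sigma_j = \pm 1$, each renormalised to lie on $S_X$ (discarding the zero vector if it occurs). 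Since $T(X)=2$, this finite set is not a $(2-\eps_n)$-net of $S_X$, so there is $e_n \in S_X$ at distance $> 2-\eps_n$ from every renormalised vector on the list. I expect this to force, after passing to the limit, the relation $\lim_n \|x+e_n\| = \|x\|+1$ first for $x$ in the dense set (via the recurrence of the $x_k$) and then for all $x \in X$ by a routine $3\eps$ argument using that $(e_n)$ has norms bounded by $1$ and the map $x \mapsto \|x+e_n\|$ is $1$-Lipschitz.

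The key technical step is the quantitative implication: if $u \in S_X$, $v \in S_X$, and $\|u - v\| > 2 - \delta$, then for every scalar $t$ one has a lower bound on $\|u + tv\|$ close to $\|u\| + |t| = 1 + |t|$, with error controlled by $\delta$. Concretely, $\|u - v\| > 2-\delta$ means $u$ and $-v$ are nearly aligned in the sense that any norming functional for $u-v$ has value $> 1 - \delta/2$ on each of $u$ and $-v$; picking such a functional $\phi \in S_{X^*}$ gives $\phi(u) > 1-\delta/2$ and $\phi(v) < -1+\delta/2$, whence for $t \ge 0$, $\|u+tv\| \ge |\phi(u-tv)| \ge (1-\delta/2) + t(1-\delta/2) = (1-\delta/2)(1+t)$, and symmetrically for $t \le 0$ one compares with $u$ and $v$ themselves. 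Applying this at stage $n$ with $u$ a renormalisation of $\pm x_k + \sum_{j\in F}\sigma_j e_j$ and $v = e_n$, and keeping track of the renormalisation constants (which are close to $\|\pm x_k + \sum_{j\in F}\sigma_j e_j\|$), yields that $e_n$ is $(\eta_n,1)$-orthogonal to $\lin\{x_1,\dots,x_n,e_1,\dots,e_{n-1}\}$ restricted to the relevant sign-combinations, with $\eta_n \to 0$ controlled by $\eps_n$; in particular $(e_n)$ is an asymptotic-$\ell_1$-type sequence and $\|x_k + e_n\| \ge (1-\eta_n)(\|x_k\|+1)$ eventually in $n$.

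Combining the two directions finishes the argument: the upper bound $\limsup_n \|x + e_n\| \le \|x\|+1$ is automatic since $\|e_n\| \le 1$, and the lower bound $\liminf_n \|x+e_n\| \ge \|x\|+1$ comes from the preceding step applied along the subsequence on which $x_k \to x$, together with the density of $(x_k)$ and the $1$-Lipschitz dependence of $\|\cdot + e_n\|$ on the first argument. Hence $\lim_n \|x+e_n\| = \|x\|+1$ for all $x \in X$, which is exactly a canonical $\ell_1$-type sequence. The main obstacle I anticipate is bookkeeping the renormalisation constants cleanly — one wants the finitely many vectors $\pm x_k + \sum_{j\in F}\sigma_j e_j$ to be bounded away from $0$ (so that renormalising is harmless), which should follow inductively from the $(\eta_j,1)$-orthogonality already built in at earlier stages, but making the induction self-consistent (choosing the $\eps_n$ small enough in advance relative to the accumulated errors) requires a little care.
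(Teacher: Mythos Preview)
Your approach is correct in spirit but substantially more elaborate than needed, and there is one small gap worth flagging.

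The paper's proof is the stripped-down version of what you do. At stage $n$ it uses only the finite set $\{-a_1,\dots,-a_n\}$ coming from a fixed dense sequence $(a_k)$ in $S_X$: since this is not a $(2-\eps_n)$-net of $S_X$, there is $e_n\in S_X$ with $\|a_k+e_n\|>2-\eps_n$ for all $k\le n$, and that is already enough. There is no need to throw the previously constructed $e_j$'s, their sign-combinations, or any renormalised vectors into the pot; consequently the bookkeeping obstacle you anticipate (keeping the combinations bounded away from~$0$) never arises. Your extra machinery would buy you that $(e_n)$ is an asymptotic $\ell_1$-sequence in the sense of Definition~\ref{2-def2} (modulo the sign-combination restriction you note), but the proposition does not ask for this.

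The one place where your write-up is actually incomplete is the passage from $S_X$ to all of $X$. Density of $(x_k)$ in $S_X$ together with the $1$-Lipschitz dependence gives $\liminf_n\|x+e_n\|\ge \|x\|+1$ only for $x\in S_X$; a ``routine $3\eps$ argument'' cannot reach vectors of arbitrary norm from there. The paper invokes a standard convexity argument for this step: given $x\neq0$, pick norming functionals $\psi_n\in S_{X^*}$ for $x/\|x\|+e_n$; since $\|x/\|x\|+e_n\|\to 2$ one has $\psi_n(x/\|x\|)\to1$ and $\psi_n(e_n)\to1$, whence $\|x+e_n\|\ge \psi_n(x)+\psi_n(e_n)\to\|x\|+1$. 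This is the missing ingredient in your final paragraph.
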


\begin{proof}
Fix a dense countable set $A=\{a_n\dopu n \in \N\}\subset S_X$ and a
null-sequence $(\eps_n)$ of positive reals. Since for every $n \in
\N$ the $n$-point set $\{-a_1, \ldots, -a_n\}$ is not a $(2 -
\eps_n)$-net of $S_X$ there is an $e_n \in S_X$ with $\|e_n -
(-a_k)\| > 2 - \eps_n$  for all  $k = 1, \ldots, n$. The
constructed sequence $(e_n)$ satisfies for every $k \in \N$ the
condition
$$
\lim_{n\to\infty} \|a_k+e_n\| =\|a_k\|+1 = 2.
$$
By the density of $A$ in $S_X$ and a standard convexity argument
(cf.\ e.g.\ \cite[page~78]{Wer})
this yields that $(e_n)$ is a canonical $\ell_1$-type sequence.
\end{proof}

By the result in \cite{BCP} mentioned in the introduction we
obtain:

\begin{cor}\label{ell1cor}
Every almost Daugavet space contains $\ell_1$. 
\end{cor}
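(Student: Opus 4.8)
The statement to prove is Corollary~\ref{ell1cor}: every almost Daugavet space contains $\ell_1$.

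The plan is essentially to chain together results already established in the paper with the cited result from \cite{BCP}. The argument proceeds in three short steps. First, I would invoke Proposition~\ref{2-T3}, which says that every almost Daugavet space $X$ has thickness $T(X)=2$. This is the key structural input linking the almost Daugavet property to thickness.

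Second, I would note that the result quoted from \cite{BCP} in the introduction states precisely that a Banach space with thickness $T(X)=2$ contains an isomorphic copy of $\ell_1$. Combining this with the conclusion $T(X)=2$ from the first step immediately yields that $X$ contains $\ell_1$. There is a subtlety worth addressing: Proposition~\ref{2-T3} is stated for general (possibly non-separable) almost Daugavet spaces, whereas Proposition~\ref{a-b} (the separable route via canonical $\ell_1$-type sequences) only applies in the separable case; so one should make clear that this corollary follows from the \emph{non-separable} Proposition~\ref{2-T3} together with the \cite{BCP} result, which itself is not restricted to separable spaces. Alternatively, in the separable case one gets a more self-contained proof: Proposition~\ref{a-b} produces a canonical $\ell_1$-type sequence $(e_n)\subset S_X$, and such a sequence is itself $\ell_1$-equivalent (indeed isometrically so, in the limit), giving a copy of $\ell_1$ directly without citing \cite{BCP}.

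The expected main obstacle here is essentially nil — the corollary is a formal consequence of machinery already in place. The only thing requiring a little care is being precise about which hypotheses (separability or not) are in force and citing the right prior result; since Proposition~\ref{2-T3} is proved in full generality and the \cite{BCP} theorem applies to all Banach spaces, the corollary holds without a separability assumption, and I would state it that way. If one preferred a proof that is internal to this paper, one would restrict to the separable case and extract the copy of $\ell_1$ from the canonical $\ell_1$-type sequence of Proposition~\ref{a-b}, using that $\lim_n\|x+e_n\|=\|x\|+1$ forces the $(e_n)$ to span an isomorphic $\ell_1$; but the one-line proof via \cite{BCP} and Proposition~\ref{2-T3} is cleaner and more general, so that is the route I would take.
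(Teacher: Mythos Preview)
Your proposal is correct and follows exactly the paper's own reasoning: the corollary is obtained by combining Proposition~\ref{2-T3} (every almost Daugavet space has $T(X)=2$) with the cited result from \cite{BCP} that $T(X)=2$ forces a copy of~$\ell_1$. Your additional remarks on separability and the alternative route via Proposition~\ref{a-b} are accurate side comments, but the paper itself gives only the one-line deduction from \cite{BCP}.
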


%%%%%%%%%%%%%%%%%%%%%%%%%%%%%%%%%%
%%%%%%%%%%%%%%%%%%%%%%%%%%%%%%%%%%
It remains to prove the  implication (b) $\Rightarrow$ (c) of
Theorem~\ref{th0.1}.

\begin{prop}\label{theo08-1}
A separable Banach space $X$ containing a canonical  $\ell_1$-type
sequence is an almost Daugavet space.
\end{prop}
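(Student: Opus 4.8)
The plan is to use Theorem~\ref{2-T1}: it suffices to produce in $X^*$ a double-norming asymptotic $\ell_1$-sequence. Let $(e_n) \subset B_X$ be a canonical $\ell_1$-type sequence, so $\lim_n \|x + e_n\| = \|x\| + 1$ for every $x \in X$. The starting observation is that for each fixed $x \in S_X$ and each large $n$ one can pick a norm-one functional $x_n^*$ (via Hahn--Banach) with $x_n^*(x + e_n) = \|x + e_n\| \approx 2$, hence $x_n^*(x) \approx 1$ and $x_n^*(e_n) \approx 1$ simultaneously. More is true: applying this to a finite-dimensional subspace $E \subset X$ and exploiting that $\|z + e_n\| \to \|z\| + 1$ uniformly on $S_E$ (by compactness of $S_E$ and a standard equicontinuity/convexity argument, cf.\ the convexity argument used in Proposition~\ref{a-b}), one gets: for every finite-dimensional $E$ and every $\delta > 0$, for all large $n$ there is an $e_n^* \in S_{X^*}$ that is $(\delta,1)$-orthogonal to $E$ in the sense of Definition~\ref{2-def1} --- i.e.\ $\|z + t e_n^*\|_{\ldots}$... more precisely, $e_n^*$ restricted suitably behaves like the dual functional, so that $\|z^* + t e_n^*\| \geq (1-\delta)(\|z^*\| + |t|)$ for $z^*$ in any prescribed finite-dimensional subspace of $X^*$. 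The clean way to see this: since $\|z + e_n\| \to \|z\|+1$ on the unit sphere of a given finite-dimensional $E \subseteq X$, and there $e_n^*(z + e_n) = \|z+e_n\|$, expansion gives $e_n^*(e_n) \to 1$; then for $z^* \in $ (a fixed finite-dimensional subspace of $X^*$) one evaluates $z^* + t e_n^*$ against a near-optimal vector for $z^*$ perturbed by $e_n$ and uses that $e_n^*(e_n)\approx 1$ while $z^*(e_n)$ is controlled.

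The inductive construction then proceeds as in the proof of Theorem~\ref{2-T1}. Fix $\eps_n > 0$ with $\prod_n(1-\eps_n) > 0$, and fix a dense sequence $(x_k) \subset S_X$ in which every term recurs infinitely often. Having built $e_1^*, \dots, e_n^* \in S_{X^*}$ with $Y_n := \lin\{e_1^*,\dots,e_n^*\}$, apply the observation above to the finite-dimensional subspace $Y_n \subset X^*$ with tolerance $\eps_n$ to obtain, for $n$ large, a functional $e_{n+1}^* \in S_{X^*}$ that is $(\eps_n,1)$-orthogonal to $Y_n$; additionally arrange $e_{n+1}^*(x_n) \geq 1 - \eps_n$ by choosing $e_{n+1}^*$ (using the Hahn--Banach functional at $x_n + e_m$ for suitably large $m$, which is then $(\eps_n,1)$-orthogonal to $Y_n$ and has value $\approx 1$ on $x_n$). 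By Definition~\ref{2-def2} the resulting sequence $(e_n^*)$ is an asymptotic $\ell_1$-sequence. Because each $x_k$ recurs infinitely often among the $(x_n)$ and $e_{n+1}^*(x_n)$ is close to $1$, the tail spans $\overline{\lin}\{e_k^*\}_{k \geq n}$ all norm every $x_k$ to within a factor tending to $1$; by density of $(x_k)$ in $S_X$, each such tail is norming, so $(e_n^*)$ is double-norming in the sense of Definition~\ref{2-def3}. Theorem~\ref{2-T1} now gives that $X$ is an almost Daugavet space.

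The main obstacle I anticipate is the uniformity step: upgrading the pointwise convergence $\|x + e_n\| \to \|x\| + 1$ to a statement that yields a single functional $e_{n+1}^*$ simultaneously $(\eps_n,1)$-orthogonal to \emph{all} of the finite-dimensional space $Y_n$ and large on $x_n$. This requires combining (i) uniform convergence on the compact sphere of a finite-dimensional subspace of $X$ (obtained from pointwise convergence plus the uniform $1$-Lipschitz property of the maps $x \mapsto \|x + e_n\|$), (ii) the Hahn--Banach selection of $e_{n+1}^*$ at a vector of the form $z + e_m$ with $z$ ranging over a fine net of $S_{Y_n}$-related directions in $X$, and (iii) a convexity argument to pass from finitely many directions to the whole subspace, exactly as in Proposition~\ref{a-b}. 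Everything else --- the induction bookkeeping, the double-norming conclusion, the appeal to Theorem~\ref{2-T1} --- is routine once this technical lemma is in place.
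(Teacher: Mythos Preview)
Your overall strategy matches the paper's: invoke Theorem~\ref{2-T1} and build a double-norming asymptotic $\ell_1$-sequence $(e_n^*)$ in $X^*$, with the double-norming part coming from $e_{n+1}^*(x_n)\approx 1$ for a recurrent dense sequence $(x_n)$. The difficulty is exactly where you locate it, but your proposed resolution does not work.

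The gap is the claim that a single Hahn--Banach functional $e_{n+1}^*$ attaining its norm at $x_n+e_m$ can be made $(\eps_n,1)$-orthogonal to $Y_n=\lin\{e_1^*,\dots,e_n^*\}$. To get $\|z^*+te_{n+1}^*\|\ge(1-\eps_n)(\|z^*\|+|t|)$ you propose to evaluate at a vector of the form $w+\alpha e_m$ with $z^*(w)\approx\|z^*\|$, using $e_{n+1}^*(e_m)\approx 1$. But the two cross terms $z^*(e_m)$ and $e_{n+1}^*(w)$ are entirely uncontrolled: the previously constructed $e_j^*$ were fixed before $m$ was chosen and there is no reason for $e_j^*(e_m)$ to be small or of a prescribed sign, and likewise $e_{n+1}^*$ is only pinned down at $x_n$ and $e_m$, not at $w$. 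A direct computation with, say, $t=1$ and both cross terms equal to $-1$ gives a value near $0$, so the inequality fails. No convexity argument of the Proposition~\ref{a-b} type rescues this, since the issue is not passing from a dense set to all of $S_{Y_n}$ but the lack of any single witness vector.

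The paper supplies the missing idea: do not let Hahn--Banach choose the values of $f_n^*$ on the tail of $(e_k)$, prescribe them. After passing to a subsequence of $(e_n)$ so that $\|x+\alpha e_{n+1}\|\ge(1-\delta_n)(\|x\|+|\alpha|)$ for $x\in\lin(E_n\cup\{e_1,\dots,e_n\})$, one defines $\tilde f_n^*$ on $\lin\{x_n,e_{n+1},e_{n+2},\dots\}$ by $\tilde f_n^*(x_n)=1-\eps_n$ and $\tilde f_n^*(e_k)=(1-\eps_{k-1})g_{n,k}$, where $(g_n)\subset\ell_\infty$ is an ``independent'' $\pm1$ sequence as in Proposition~\ref{2-T4}. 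The subsequence estimate guarantees $\|\tilde f_n^*\|\le 1$, and one extends by Hahn--Banach. The payoff is that now \emph{every} $f_s^*(e_j)$ with $j>s$ is known, and independence of $(g_n)$ provides, for any coefficients $a_1,\dots,a_n$, infinitely many $j$ with $g_{s,j}=\sign a_s$ for all $s\le n$; evaluating $\sum_s a_s f_s^*$ at such $e_j$ gives $\sum_s|a_s|$ in the limit. This yields an isometric $\ell_1$-basis, not merely an asymptotic one, and the double-norming property follows from $f_n^*(x_n)=1-\eps_n$ exactly as you describe.
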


\begin{proof}
We will use Theorem~\ref{2-T1}.
Fix an increasing sequence of finite-dim\-ensional subspaces $E_1
\subset E_2 \subset E_3 \subset \ldots$ whose union is dense in~$X$. Also, fix
sequences $\eps_n\searrow 0$ and $\delta_n>0$ such that for all~$n$
\begin{equation}\label{eq08-01}
  \prod_{k=n}^\infty (1-\delta_k) \ge 1-\eps_n .
\end{equation}
Passing to a subsequence if necessary  we can find a canonical
 $\ell_1$-type sequence  $(e_n)$ satisfying the following additional
condition: For every $x\in \lin (E_n\cup\{e_1,\dots,e_n\})$ and every
$\alpha\in \R$ we have
\begin{equation}\label{eq08-02}
  \|x+\alpha e_{n+1}\| \ge (1-\delta_n)(\|x\|+|\alpha|).
\end{equation}
Then we have for every $x\in E_n$ and every $y= \sum_{k=n+1}^M a_k
e_k$ by (\ref{eq08-01}) and (\ref{eq08-02})
\begin{equation}\label{eq08-03}
\|x+y\| \ge (1-\eps_n) \|x\| + \sum_{k=n+1}^M (1-\eps_{k-1}) |a_k|.
\end{equation}
Fix a dense sequence $(x_n)$ in $S_X$ such that $x_n\in E_n$ and every
element of the range of the sequence is attained infinitely often,
that is for each $m\in\N$ the set $\{n\dopu x_n=x_m\}$ is infinite.
Finally, fix an ``independent'' sequence $(g_n)\subset \ell_ \infty$,
$g_{n,j}=\pm1$, as  in the proof of Proposition~\ref{2-T4}.

Now we are ready to construct a double-norming asymptotic
$\ell_1$-sequence $(f_n^*)\subset X^*$. First we define $\tilde f_n^*$
on $F_n:= \lin\{x_n,e_{n+1}, e_{n+2}, \dots\}$ by
\begin{eqnarray}
\label{eq08-04}
  \tilde f_n^*(x_n) &=& 1-\eps_n, \\
\label{eq08-05}
\tilde f_n^*(e_k) &=& (1-\eps_{k-1})g_{n,k} \qquad\mbox{(if }k>n).
\end{eqnarray}
By (\ref{eq08-03}), $\|\tilde f_n^*\|\le 1$, and indeed $\|\tilde
f_n^*\|= 1$ by (\ref{eq08-05}). Define $f_n^*\in X^*$ to be a
Hahn-Banach extension of $\tilde f_n^*$. Condition~(\ref{eq08-04})
and the choice of $(x_n)$ ensure that $(f_n^*)$ is double-norming.
Let us show that it is an isometric $\ell_1$-basis. Indeed, due to
our definition of an ``independent'' sequence, for an arbitrary finite
collection $A =\{a_1, \ldots, a_n \}$ of non-zero coefficients the
set $J_A$ of those $j > n$ that $g_{s,j} = \sign a_s$, $s = 1,
\ldots, n$, is infinite. So by (\ref{eq08-05})
$$
\Biggl\|\sum_{s=1}^n a_s f_s^* \Biggr\| \ge \sup_{j \in J_A} \Biggl(
\sum_{s=1}^n a_s f_s^*\Biggr)e_j = \sup_{j \in J_A} (1-\eps_{j-1})
\sum_{s=1}^n |a_s| = \sum_{s=1}^n |a_s|.
$$
\end{proof}

Since we have constructed an isometric $\ell_1$-basis 
(over the reals) in the last
proof, we have obtained the following version of Theorem~\ref{2-T1}.

\begin{cor}\label{2-T1-cor}
A real separable \BS\ $X$ is an almost Daugavet space 
if and only if $X^*$
contains a double-norming isometric $\ell_{1}$-sequence.
\end{cor}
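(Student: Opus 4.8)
The plan is to derive Corollary~\ref{2-T1-cor} as a direct strengthening of Theorem~\ref{2-T1}, using the proof of Proposition~\ref{theo08-1} to upgrade ``asymptotic'' to ``isometric''. One direction is immediate: if $X^*$ contains a double-norming isometric $\ell_1$-sequence, then that sequence is in particular a double-norming asymptotic $\ell_1$-sequence (take $\eps_n\to 0$ fast enough that $\prod(1-\eps_n)>0$; in fact one may take $\eps_n=0$ for all but, say, the first index, or simply observe that an isometric $\ell_1$-sequence is $(\eps,1)$-orthogonal to the span of its predecessors for every $\eps>0$), so Theorem~\ref{2-T1} gives that $X$ is an almost Daugavet space.

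For the converse, suppose $X$ is a real separable almost Daugavet space. By Corollary~\ref{ell1cor} (or directly by Theorem~\ref{th0.1}, implication (c)$\Rightarrow$(a)$\Rightarrow$(b)), $X$ contains a canonical $\ell_1$-type sequence. Now I would simply invoke the construction carried out in the proof of Proposition~\ref{theo08-1}: starting from a canonical $\ell_1$-type sequence in the separable space~$X$, that proof produces a double-norming sequence $(f_n^*)\subset X^*$ which, as shown there via the ``independent'' sequence $(g_n)$ and estimate~(\ref{eq08-05}), is an \emph{isometric} $\ell_1$-basis. Hence $X^*$ contains a double-norming isometric $\ell_1$-sequence, which is exactly what is claimed. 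So the corollary is really just the observation that the proof of Proposition~\ref{theo08-1} delivers more than Theorem~\ref{2-T1} literally records — this is what the sentence preceding the corollary in the excerpt already announces.

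The only points requiring a little care are bookkeeping ones. First, one must be sure that a double-norming isometric $\ell_1$-sequence genuinely qualifies as a double-norming asymptotic $\ell_1$-sequence in the sense of Definition~\ref{2-def2}; this is where the flexibility in the choice of the $\eps_n$ matters, and it is routine since for an isometric $\ell_1$-basis $(e_n^*)$ one has $\|x+te_{n+1}^*\|=\|x\|+|t|\ge(1-\eps_n)(\|x\|+|t|)$ for every $\eps_n>0$. Second, the restriction to real scalars is inherited from Proposition~\ref{theo08-1}, where the isometric $\ell_1$-basis is constructed over~$\R$ (the functionals $f_n^*$ take values $\pm(1-\eps_{k-1})$ on the $e_k$, and the lower estimate uses $\sign a_s$); this is why the corollary is stated for real spaces, matching the remark in Section~3 that the complex case of Theorem~\ref{th0.1} reduces to the underlying real space but does not automatically yield complex isometric $\ell_1$-bases. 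I do not anticipate any real obstacle here: the substance of the argument is already contained in Proposition~\ref{theo08-1} and Theorem~\ref{2-T1}, and the corollary is a matter of reading off what was proved.
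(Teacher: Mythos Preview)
Your proposal is correct and follows exactly the paper's own argument: the ``if'' direction is immediate from Theorem~\ref{2-T1}, and for the ``only if'' direction you pass from almost Daugavet to a canonical $\ell_1$-type sequence (via Propositions~\ref{2-T3} and~\ref{a-b}) and then invoke the construction in the proof of Proposition~\ref{theo08-1}, which produces a double-norming \emph{isometric} $\ell_1$-basis in~$X^*$. The paper records precisely this, in one sentence, immediately before stating the corollary.
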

%%%%%%%%%%%%%%%%%%%%%%%%%%%%%%%%%
\section{Proof of Theorem \ref{th0.2}}

We start with two lemmas.

\begin{lemma}\label{lem08-1}
Let $X$ be a linear space, $(e_n)\subset X$, and let $p$ be a seminorm
on $X$. Assume that $(e_n)$ is an isometric $\ell_1$-basis with
respect to $p$, i.e.,
$p(\sum_{k=1}^n a_k e_k )= \sum_{k=1}^n |a_k|$ for all
$a_1,a_2,\ldots\in\K$. 
Fix a free ultrafilter $\mathcal{U}$ on $\N$ and define
$$
p_r(x)= \mathcal{U}\mbox{-}\lim_n p(x+r e_n)-r
$$
for $x\in X$ and $r>0$. Then:
 \begsta
  \item $0\le p_r(x)\le p(x)$
for all $x\in X$,
 \item $p_r(x)=p(x)$ for all $x\in
\lin\{e_1,e_2,\dots\}$,
 \item the map $x\mapsto p_r(x)$ is convex
for each $r$,
 \item the map  $r\mapsto p_r(x)$ is convex for each $x$,
\item $p_r(tx)= tp_{r/t}(x)$ for each $t>0$,
\item $|p_r(x) - p_r(y)| \le p(x-y)$ for all $x,y\in X$.
\endsta
\end{lemma}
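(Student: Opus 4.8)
The plan is to verify the six properties essentially in the order listed, since the later ones build on the earlier. The definition $p_r(x) = \U\text{-}\lim_n p(x+re_n) - r$ makes sense because $n\mapsto p(x+re_n)$ is a bounded sequence: indeed $|p(x+re_n)-r|\le p(x)$ by the triangle inequality (using $p(re_n)=r$, which follows from the isometric $\ell_1$ hypothesis applied to the single vector $re_n$), and since $\U$ is an ultrafilter on $\N$ every bounded real sequence has a $\U$-limit. That same two-sided estimate $|p(x+re_n)-r|\le p(x)$ immediately gives part (i): passing to the $\U$-limit yields $-p(x)\le p_r(x)\le p(x)$, and the lower bound $p_r(x)\ge 0$ needs the reverse triangle inequality $p(x+re_n)\ge p(re_n)-p(x)$... wait, that gives $p(x+re_n)-r\ge -p(x)$ again; for $p_r(x)\ge 0$ I would instead argue that $p(x+re_n)\ge p(re_n) - p(-x) = r - p(x)$ is not enough, so here is the right move: $p((x+re_m)+(x+re_n)) = p(2x + r(e_m+e_n))\ge p(r(e_m+e_n)) - p(2x) = 2r - 2p(x)$ does not do it either. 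The clean argument: for $m\ne n$, $p(re_m + re_n)=2r$ by the $\ell_1$ property, so $2r \le p(x+re_m)+p(re_n - x) \le p(x+re_m)+p(x+re_n)$ once we also note $p$ is a seminorm hence symmetric, giving $p(re_n-x)=p(x-re_n)$; hmm, that is not $p(x+re_n)$. I will instead simply observe $2r = p(re_m+re_n)\le p(x+re_m)+p(-x+re_n)$ and that $p(-x+re_n)$ and $p(x+re_n)$ both converge along $\U$ to the same limit by (vi) (proved independently below, since (vi) only uses the triangle inequality), so taking $\U$-limits in $m$ and $n$ gives $2r\le (p_r(x)+r)+(p_r(x)+r)$, i.e.\ $p_r(x)\ge 0$. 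Part (vi) itself is the pure triangle inequality: $p(x+re_n)\le p(y+re_n)+p(x-y)$ and symmetrically, so $|p(x+re_n)-p(y+re_n)|\le p(x-y)$ for every $n$, and the $\U$-limit preserves the bound.

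Next, part (ii): if $x=\sum_{k=1}^N a_k e_k\in\lin\{e_1,e_2,\dots\}$, then for $n>N$ the isometric $\ell_1$ property gives $p(x+re_n)=\sum_{k=1}^N|a_k| + r = p(x)+r$ exactly, so the sequence is eventually constant equal to $p(x)+r$ and hence $p_r(x)=p(x)$. For part (iii), convexity in $x$: fix $x,y$ and $\lambda\in[0,1]$; for each $n$, $p(\lambda x+(1-\lambda)y + re_n) = p(\lambda(x+re_n)+(1-\lambda)(y+re_n)) \le \lambda p(x+re_n)+(1-\lambda)p(y+re_n)$, and the $\U$-limit of a pointwise inequality between bounded sequences is the corresponding inequality, giving $p_r(\lambda x+(1-\lambda)y)+r \le \lambda(p_r(x)+r)+(1-\lambda)(p_r(y)+r)$, i.e.\ the desired convexity (the constant $r$ cancels). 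Part (v) is a substitution trick: $p_r(tx) = \U\text{-}\lim_n p(tx+re_n) - r = t\,\U\text{-}\lim_n p(x+(r/t)e_n) - r = t\bigl(p_{r/t}(x)+r/t\bigr) - r = t\,p_{r/t}(x)$, using positive homogeneity of the seminorm $p$ to pull out $t$.

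Part (iv), convexity of $r\mapsto p_r(x)$ for fixed $x$, is the one I expect to be the genuine obstacle, since it mixes two different parameters and we cannot just pull the convexity through the $\U$-limit directly. The key is to exploit the $\ell_1$-independence of the basis vectors across \emph{different} indices. Given $r,s>0$, write $r_0=\tfrac12(r+s)$; I want $p_{r_0}(x)\le \tfrac12 p_r(x)+\tfrac12 p_s(x)$. The trick is to use two distinct basis vectors: for $m\ne n$, by the $\ell_1$ property $p\bigl(2x + re_m + se_n\bigr) \le p(2x+re_m+se_n)$ — rather, estimate $p\bigl((x+re_m)+(x+se_n)\bigr)\le p(x+re_m)+p(x+se_n)$ by the triangle inequality, while on the other hand $2x+re_m+se_n = (2x + (r+s)e_m) + (se_n - se_m)$ and the last vector has $p$-value $2s$, no — I will instead go the other way: by the reverse triangle inequality applied with the independent vectors, $p(2x + (r+s)e_m) $ compared with $p(2x+re_m+se_n)$ differs by at most $p(se_m - se_n)=2s$, which is the wrong order of magnitude. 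The correct route, which I would work out carefully, is: $p\bigl(x + re_m + (x+se_n)\bigr)$... the point is that $re_m$ and $se_n$ sit in the $\ell_1$ directions so that $p(u + re_m + se_n) = p(u+re_m) + s$ whenever $e_n$ is "new", giving, for $n\gg m$, $p(2x + re_m + se_n) = p(2x+re_m) + s$; then let $n\to\U$ to get $p(2x+re_m)+s$, then let $m\to\U$ to get $(p_r(2x)+r)+s = 2p_{r/2}(x) + r + s$; but also $p(2x+re_m+se_n)=p(2x+se_n+re_m) = p(2x+se_n)+r$ for $m\gg n$, giving by symmetry $2p_{s/2}(x)+r+s$. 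Combining with $p_{r_0}(2x) = 2p_{r_0/2}(x) = 2p_{(r+s)/4}(x)$ via (v), and a triangle-inequality bound $p(2x + (r+s)e_\ell) \le p(x + re_m + se_n$-type term$)$ chosen along the ultrafilter, yields $2p_{(r+s)/4}(x) + (r+s) \le \tfrac{?}{?}$. Rescaling by (v) once more converts this into $p_{r_0}(x)\le\tfrac12 p_r(x)+\tfrac12 p_s(x)$. I would present this last argument as the heart of the proof, spelling out the index choices ($m,n$ tending to infinity along $\U$ one at a time, with the "newer" index always dominating) and invoking the $\ell_1$-isometry exactly at the step $p(u + te_j) = p(u) + t$ for $u$ a fixed vector and $j$ large, which holds because $u$ can be approximated in $p$ by finite combinations of the $e_k$ and then the $\ell_1$ property is exact. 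All other parts are then routine consequences of the triangle inequality, positive homogeneity, and the fact that $\U$-limits respect pointwise inequalities and are linear on bounded sequences.
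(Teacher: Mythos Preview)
Your arguments for (b), (c), (e), (f) are fine and match the paper's implicit treatment of those parts as routine. There are, however, two genuine gaps.

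\medskip
\textbf{Positivity $p_r\ge 0$ in (a).} Your chain
\[
2r = p(re_m+re_n)\le p(x+re_m)+p(-x+re_n)
\]
is correct, but the next step is not: you assert that $\U\text{-}\lim_n p(-x+re_n)$ equals $\U\text{-}\lim_n p(x+re_n)$ ``by (vi)''. Property (vi) only gives $|p_r(x)-p_r(-x)|\le p(2x)$, not equality; in general $p_r$ is not symmetric (that is precisely why the paper later has to symmetrize to form~$q$). Your inequality therefore yields only $p_r(x)+p_r(-x)\ge 0$, which is too weak. The paper's fix is to keep the \emph{same} $x$ on both sides: choose $n_\eps$ with $p(x+re_{n_\eps})\le \U\text{-}\lim_n p(x+re_n)+\eps$, and then for $n\ne n_\eps$ use
\[
p(x+re_n)=p\bigl((x+re_{n_\eps})+r(e_n-e_{n_\eps})\bigr)\ge p\bigl(r(e_n-e_{n_\eps})\bigr)-p(x+re_{n_\eps})=2r-p(x+re_{n_\eps}),
\]
whence $2\,\U\text{-}\lim_n p(x+re_n)\ge 2r-2\eps$ and $p_r(x)\ge 0$.

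\medskip
\textbf{Convexity in $r$, part (d).} This is not ``the heart of the proof'' but a one-liner: for each fixed $n$, the map $r\mapsto p(x+re_n)$ is a seminorm composed with an affine map, hence convex; subtracting the affine function $r$ keeps it convex; and a $\U$-limit of convex functions is convex (since $\U$-limits are linear and preserve inequalities). Your two-index argument is unnecessary, and it also rests on a false claim: you write that ``$p(u+te_j)=p(u)+t$ for $u$ a fixed vector and $j$ large, because $u$ can be approximated in $p$ by finite combinations of the $e_k$''. There is no such approximation hypothesis; $X$ is an arbitrary linear space and $\lin\{e_k\}$ need not be $p$-dense. So that step simply fails.
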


\begin{proof}
The only thing that is not obvious is that $p_r$ is positive; note
that (b) is a well-known property of the unit vector basis of
$\ell_1$. Now, given $\eps>0$ pick $n_\eps$ such that
$$
p(x+re_{n_\eps}) \le \mathcal{U}\mbox{-}\lim_n p(x+r e_n) + \eps.
$$
Then for each $n\neq n_\eps$
\bea
p(x+re_n) &=& p(x+re_{n_\eps} +r (e_n-e_{n_\eps})) \\
&\ge&
2r - p(x+re_{n_\eps}) \\
& \ge&
2r - \mathcal{U}\mbox{-}\lim_n p(x+r e_n) - \eps;
\eea
hence
$2\mathcal{U}\mbox{-}\lim_n p(x+re_n) \ge  2r-2\eps$ and $p_r\ge0$.
\end{proof}

%%%%%%%%%%%%%%%%%%%%%%%%%%%%%%%%%

\begin{lemma}\label{lem08-2}
Assume the conditions of Lemma~\ref{lem08-1}. Then the function
$r\mapsto p_r(x)$ is decreasing  for each $x$. The quantity
$$
\bar p (x):= \lim_{r\to\infty}  p_r(x) = \inf_r p_r(x)
$$
satisfies \textrm{(a)--(c)} of Lemma~\ref{lem08-1} and moreover
\begin{equation}\label{eq08-06}
  \bar p(tx) = t \bar p(x) \qquad\mbox{for }t>0,\ x\in X.
\end{equation}
\end{lemma}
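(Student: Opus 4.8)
\textbf{Proof plan for Lemma~\ref{lem08-2}.}
The plan is to first establish monotonicity of $r\mapsto p_r(x)$, then derive the stated properties of $\bar p$ from the corresponding properties of the $p_r$ in Lemma~\ref{lem08-1}. For the monotonicity, fix $x\in X$ and $0<r<s$. I want $p_s(x)\le p_r(x)$. Write $s = \lambda r + (1-\lambda)\cdot 0$-type convex combinations are not directly available since $p_r$ is only defined for $r>0$; instead I would use the homogeneity relation (e) of Lemma~\ref{lem08-1}, namely $p_r(tx)=t\,p_{r/t}(x)$, together with convexity in $r$ (property (d)). Concretely, convexity of $r\mapsto p_r(x)$ on $(0,\infty)$ plus the uniform bound $0\le p_r(x)\le p(x)$ from (a) forces the function to be non-increasing: a convex function on $(0,\infty)$ that is bounded cannot be eventually increasing, and in fact boundedness of a convex function on a half-line $(0,\infty)$ implies it is monotone non-increasing on all of $(0,\infty)$. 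I would spell this out: if $p_{r_0}(x)<p_{r_1}(x)$ for some $r_0<r_1$, then by convexity the slopes keep increasing, so $p_r(x)\to\infty$ as $r\to\infty$, contradicting $p_r(x)\le p(x)$.

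Once monotonicity is in hand, $\bar p(x)=\lim_{r\to\infty}p_r(x)=\inf_{r>0}p_r(x)$ exists in $[0,p(x)]$, giving property (a) for $\bar p$ immediately. Property (b), $\bar p(x)=p(x)$ for $x\in\lin\{e_1,e_2,\dots\}$, follows because $p_r(x)=p(x)$ for all $r$ by Lemma~\ref{lem08-1}(b), so the infimum is again $p(x)$. Property (c), convexity of $x\mapsto\bar p(x)$, follows since $\bar p$ is a pointwise infimum (equivalently, a pointwise limit) of the convex functions $p_r$, and a pointwise limit of convex functions is convex.

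The one genuinely new assertion is the positive homogeneity (\ref{eq08-06}). Here I would use (e) of Lemma~\ref{lem08-1}: for $t>0$,
\[
\bar p(tx)=\inf_{r>0}p_r(tx)=\inf_{r>0} t\,p_{r/t}(x)=t\inf_{r>0}p_{r/t}(x)=t\inf_{\rho>0}p_\rho(x)=t\,\bar p(x),
\]
the substitution $\rho=r/t$ being a bijection of $(0,\infty)$ onto itself. This is a short computation; the only care needed is that the $\inf$ commutes with multiplication by the positive scalar $t$, which is elementary.

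I expect the main obstacle to be the monotonicity step, i.e., getting from ``convex and bounded on $(0,\infty)$'' to ``non-increasing on $(0,\infty)$'' cleanly. The argument is standard but must be stated with the right direction: a convex function bounded above on $(0,\infty)$ has non-positive right derivative everywhere, hence is non-increasing. Alternatively one can avoid invoking (d) and argue directly from the definition: for $0<r<s$ and any $n$, estimate $p(x+se_n)$ in terms of $p(x+re_n)$ using the $\ell_1$-basis property of $(e_n)$ applied to the block $e_n$ — but passing the $\mathcal U$-limit through such an estimate requires choosing $n$ well (as in the proof of Lemma~\ref{lem08-1}(a)), so the convexity route is cleaner. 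Everything else reduces to the observation that infima and limits preserve the relevant inequalities and convexity, which is routine.
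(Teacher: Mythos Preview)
Your proposal is correct and follows essentially the same route as the paper: the paper's proof is the terse two-liner ``by (a) and (d), $r\mapsto p_r(x)$ is bounded and convex, hence decreasing; and (\ref{eq08-06}) follows from (e),'' which is exactly what you unpack. One minor remark: be careful with the phrasing ``pointwise infimum of convex functions'' for (c), since infima of convex functions are not convex in general; your parenthetical fix (that here the infimum equals the pointwise limit by monotonicity, and limits do preserve convexity) is the right way to say it.
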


\begin{proof}
By Lemma~\ref{lem08-1}(a) and (d),
$r\mapsto p_r(x)$ is bounded and convex, hence
decreasing. Therefore, $\bar p$ is well defined. Clearly,
(\ref{eq08-06}) follows from (e) above.
\end{proof}

%%%%%%%%%%%%%%%%%%%%%%%%%%%%%%%%%
Since for separable spaces the condition $T(X)=2$ is equivalent to
the presence of a  canonical $\ell_1$-type sequence  and a  canonical
 $\ell_1$-type sequence evidently contains a subsequence
equivalent to the canonical basis of $\ell_1$, to prove
Theorem~\ref{th0.2} it is sufficient to demonstrate the following:

\begin{theorem}\label{theo08-2}
Let $X$ be a Banach space containing a copy of $\ell_1$. Then $X$
can be renormed to admit a canonical $\ell_1$-type sequence.
Moreover if $(e_n) \subset X$ is an arbitrary sequence equivalent
to the canonical basis of $\ell_1$ in the original norm, then one
can construct an equivalent norm on $X$ in such a way that $(e_n)$
is isometrically equivalent to the canonical basis of $\ell_1$ and
$(e_n)$ forms a canonical $\ell_1$-type sequence in the new norm.
\end{theorem}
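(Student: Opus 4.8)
The plan is to build the new norm on $X$ from a seminorm $p$ on $X$ that is $1$-equivalent to $\|\cdot\|_{\ell_1}$ on $\lin\{e_n\}$ and dominated by the original norm everywhere, and then to run the machinery of Lemmas~\ref{lem08-1} and~\ref{lem08-2} to produce an auxiliary functional $\bar p$ whose behaviour along the sequence $(e_n)$ is exactly the canonical $\ell_1$-type condition. Concretely, since $(e_n)$ is equivalent to the canonical $\ell_1$-basis, after rescaling we may assume that $\|\sum a_k e_k\|\ge \sum|a_k|$ and that $\|e_n\|\le C$; we take for $p$ the gauge of the symmetric convex hull of $\{e_n\}$ together with a small multiple of $B_X$, i.e.\ $p(x)=\inf\{\sum|a_k|+\lambda\|y\|\dopu x=\sum a_k e_k+\lambda y,\ \lambda\ge 0\}$ suitably normalised so that $p\le\|\cdot\|$, $p(e_n)=1$, and $p$ restricted to $\lin\{e_n\}$ is the $\ell_1$-norm. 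This $p$ is an equivalent norm (it is bounded below by a constant times $\|\cdot\|$ because $\ell_1$ does not change the isomorphism type of a bounded-below map), so Lemma~\ref{lem08-1}(b) applies to it.

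Next I would feed $p$ into Lemma~\ref{lem08-1} with a fixed free ultrafilter $\U$ to obtain the convex positively-homogeneous functional $\bar p$ of Lemma~\ref{lem08-2}. The candidate for the new norm is
$$
\Norm x\Norm := \tfrac12\bigl(p(x)+\bar p(x)\bigr),
$$
or, if a cleaner formula is wanted, one can instead use $\Norm x \Norm = \max\{\bar p(x),\ \rho\, p(x)\}$ for a small $\rho>0$; either way one gets a genuine norm: it is convex and positively homogeneous by parts (a), (c) and (\ref{eq08-06}) of the two lemmas, it is symmetric because $p$ is and the $\U$-limit of $p(x+re_n)$ equals that of $p(-x+re_n)$ (using $p(e_n)=p(-e_n)$ and shifting indices via a finite permutation, which a free ultrafilter ignores), and it is equivalent to $\|\cdot\|$ since $\bar p\le p\le\|\cdot\|$ and $p$ is already equivalent to $\|\cdot\|$. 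On $\lin\{e_n\}$ we have $p=\bar p=\ell_1$-norm by Lemma~\ref{lem08-1}(b), so $(e_n)$ is isometrically $\ell_1$ in the new norm.

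It remains to check the canonical $\ell_1$-type property in $\Norm\cdot\Norm$, i.e.\ that $\lim_{n}\Norm x+e_n\Norm=\Norm x\Norm+1$ for every $x\in X$; since the range of $(e_n)$ along $\U$ controls the limit it suffices to compute the $\U$-limit and then note the sequence of norms $\Norm x+e_n\Norm$ is bounded, so if the $\U$-limit has the right value for every free ultrafilter the ordinary limit does too — actually one should fix $\U$ first and show the genuine limit exists, which follows because $p(x+e_n)$ and $\bar p(x+e_n)$ each have a limit along $\U$ and by monotonicity arguments one upgrades this to an ordinary limit. The key computation is that $\U\text{-}\lim_n p(x+e_n)=p_1(x)+1$ by definition, and more importantly that $\U\text{-}\lim_n \bar p(x+e_n)=\bar p(x)+1$: the inequality $\le$ comes from convexity and the estimate $\bar p(x+e_n)\le \bar p(x)+\bar p(e_n)=\bar p(x)+1$, while the reverse inequality is where the $(\eps,1)$-orthogonality hidden in $p$ does its work — one uses that for $y$ a far-out average of the $e_k$'s, $p(x+e_n+ry)\approx p(x+e_n)+p(e_n+ry)$ up to the isometric $\ell_1$ structure, lets $r\to\infty$, and takes the $\U$-limit in $n$.

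The main obstacle I expect is precisely this last lower bound: showing that passing to $\bar p$ (the $r\to\infty$ limit of the $p_r$) does not destroy the additivity $\bar p(x+e_n)\ge \bar p(x)+1$ in the limit in $n$. The $p_r$'s are only convex, not positively homogeneous in $x$, and the two limits ($r\to\infty$ and $n\to\U$) must be interchanged; the resolution is to use the uniform Lipschitz bound in Lemma~\ref{lem08-1}(f) to control the error, together with the scaling identity Lemma~\ref{lem08-1}(e) to relate $p_r(x+e_n)$ at large $r$ to $p_1$ of a rescaled argument, and finally the isometric $\ell_1$-structure of $(e_n)$ in $p$ (available uniformly on tails, by the remark following Definition~\ref{2-def2}) to split off the $e_n$ term cleanly. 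Once the additivity along $(e_n)$ is established for $\bar p$ and (trivially) for $p$, averaging gives it for $\Norm\cdot\Norm$, completing the proof; the first assertion of the theorem is then the special case where $(e_n)$ is any copy of $\ell_1$ furnished by the hypothesis.
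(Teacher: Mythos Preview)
Your proposal has a genuine gap. The seminorm $p$ you construct explicitly---essentially $p(x)=\inf_{y\in Y}\{\|y\|_{\ell_1}+\|x-y\|\}$---does \emph{not} in general satisfy $\lim_n p(x+e_n)=p(x)+1$. For a concrete failure take $X=\ell_1\oplus_\infty\R$, $(e_n)$ the unit vectors of the $\ell_1$ summand, and $x=(0,1)$: then $p(x)=1$ but also $p(x+e_n)=1$ for every~$n$ (choose $y=0$ in the infimum). You seem aware of this, since you write $\U\text{-}\lim_n p(x+e_n)=p_1(x)+1$ rather than $p(x)+1$; but then you still claim that additivity along $(e_n)$ holds ``(trivially) for $p$'' when you average, and that is exactly what fails. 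Consequently $\tfrac12(p+\bar p)$ is not a canonical $\ell_1$-type norm: its $\U$-limit along $(e_n)$ is $\tfrac12(p_1(x)+\bar p(x))+1$, not $\tfrac12(p(x)+\bar p(x))+1$. Your alternative $\max\{\bar p,\rho p\}$ has the same defect (in the example above $\bar p(x)=0$, so $\Norm x\Norm=\rho$ while $\Norm x+e_n\Norm=1\ne\rho+1$). Your symmetry argument for $\bar p$ is also wrong: $p(-x+re_n)=p(x-re_n)$, and no finite permutation of indices turns $-e_n$ into some $e_m$.

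The missing idea is \emph{minimality}. The paper considers the family $\mathcal P$ of all seminorms dominated by $\|\cdot\|$ and equal to the $\ell_1$-norm on $Y=\overline\lin\{e_n\}$, and uses Zorn's lemma to pick a minimal $p\in\mathcal P$. Lemmas~\ref{lem08-1}--\ref{lem08-2} are then applied exactly as you intend, but now the symmetrised $\bar p$ is again in $\mathcal P$ and is $\le p$, so minimality forces $\bar p=p$. This gives $p_1=p$, i.e.\ the canonical $\ell_1$-type property holds for $p$ itself, and your limit-interchange obstacle never arises. The final norm is $\Norm x\Norm=p(x)+\|[x]\|_{X/Y}$; the quotient-norm term is what restores equivalence with the original norm, since the minimal seminorm $p$ need not be a norm (as the example shows: there $\bar p(x)=0$, and after minimality $p=\bar p$).
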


\begin{proof}
Let $Y$ be a subspace of $X$ isomorphic to $\ell_1$, and let $(e_n)$
be its canonical basis. To begin with, we can renorm $X$ in such a way
that $Y$ is isometric to $\ell_1$ and  $(e_n)$ is an isometric
$\ell_1$-basis.

Let $\mathcal{P}$ be the family of all
seminorms $\tilde p$ on $X$ that are dominated
by the norm of $X$ and for which $\tilde p(y)= \|y\|$ for $y\in Y$.
By Zorn's lemma, $\mathcal{P}$ contains a minimal element, say~$p$.
We shall argue  that
\begin{equation}\label{eq08-07}
  \lim_{n\to \infty} p(x+e_n) = p(x)+1 \qquad \forall x\in X.
\end{equation}
To show this it is sufficient to prove  that for every free ultrafilter
$\mathcal{U}$ on $\N$
\begin{equation}\label{eq08-08}
  \mathcal{U}\mbox{-}\lim_{n} p(x+e_n) = p(x)+1 \qquad \forall x\in X.
\end{equation}
To this end associate to $p$ and $\mathcal{U}$ the functional $\bar p$ from
Lemma~\ref{lem08-2}. Note that $0\le \bar p\le p$, but
a priori $\bar p$ need not be a seminorm. However,
$$
q(x) = \frac{ \bar p(x) + \bar p(-x)}2
$$
in the real case, resp.\ 
$$
q(x)= \int_0^1 \bar p(e^{2\pi it}x)\,dt
$$
in the complex case, 
defines a seminorm, and $q\le p$.
(Lemma~\ref{lem08-1}(f) implies that the integrand is a
continuous function of~$t$.)
By Lemma~\ref{lem08-1}(b) and by
minimality of $p$ we get that
\begin{equation}\label{eq08-09}
  q(x)=p(x) \qquad \forall x\in X.
\end{equation}
Now, since  $p(x)=p(\lambda x)\ge \bar p(\lambda x)$ whenever
$\lambda$ is a scalar of modulus~1,
%Now, since $p(x)\ge \bar p(x)$ and $p(x)=p(-x)\ge \bar p(-x)$,
(\ref{eq08-09}) implies that $p(x)=\bar p(x)$.
Finally, by Lemma~\ref{lem08-1}(a) and the definition of $\bar p$ we
have $p(x)=p_r(x)$ for all $r>0$; in particular $p(x)=p_1(x)$, which
is our claim~(\ref{eq08-08}).

To complete the proof of the theorem, consider
$$
\Norm x \Norm := p(x) + \|[x]\|_{X/Y};
$$
this is the equivalent norm that we need. Indeed, clearly $\Norm
x\Norm \le 2\|x\|$. On the other hand, $\Norm
x\Norm \ge \frac13 \|x\|$. To see this assume $\|x\|=1$. If
$\|[x]\|_{X/Y}\ge \frac13$, there is nothing to prove. If not, pick
$y\in Y$ such that $\|x-y\|<\frac13$. Then
$p(y)=\|y\|>\frac23$, and
$$
\Norm x \Norm \ge p(x) \ge p(y)- p(x-y) > \frac23 - \|x-y\| > \frac13.
$$
Therefore, $\|\;.\;\|$ and $\Norm\;.\;\Norm$ are equivalent norms, and
$$
\lim_{n\to\infty} \Norm x+e_n \Norm =
\lim_{n\to\infty} p( x+e_n ) + \|[x]\|_{X/Y} =
p(x) + 1 + \|[x]\|_{X/Y} =
\Norm x \Norm + 1
$$
shows that $(e_n)$ is a  canonical $\ell_1$-type sequence  for the
new norm.
\end{proof}

We would like to mention another proof of Theorem~\ref{theo08-2} that
was suggested to us by W.B. Johnson. In this proof $X$ is a real
Banach space. Let again $Y\subset X$ be a subspace isometric to
$\ell_1$ with canonical basis $(e_n)$. We denote by $(r_n)$ the sequence
of Rademacher functions in $L_\infty[0,1]$. Then there is a norm-1
operator from $Y$ to $L_\infty[0,1]$ mapping  $e_n$ to $r_n$, for
each~$n$. Since $L_\infty[0,1]$ is 1-injective, the operator can be
extended to a norm-1 operator $T\dopu X\to L_\infty[0,1]$. If we let
$$
\Norm x\Norm = \|Tx\| + \|[x]\|_{X/Y},
$$
then this equivalent norm works; the details of the computation are
the same as above.

%%%%%%%%%%%%%%%%%%%%%%%%%%%%%%%%%%%%%%%%%%%%%%%%%%%%%%%%%%%%%%%%%%%%%
%%%%%%%%%%%%%%%%%%%%%%%%%%%%%%%%%%%%%%%%%%%%%%%%%%%%%%%%%%%%%%%%%%%%%

\end{document}

%%%%%%%%%%%%%%%%%%%%%%%%%%%%%%%%%%%%%%%%%%%%%%%%%%%%%%%%%%%%%%%%%%%%%
%%%%%%%%%%%%%%%%%%%%%%%%%%%%%%%%%%%%%%%%%%%%%%%%%%%%%%%%%%%%%%%%%%%%%